\numberwithin{equation}{section}
\newtheorem{thm}{Theorem}[section]
\newtheorem{prop}[thm]{Proposition}
\newtheorem{cor}[thm]{Corollary}
\newtheorem{lemma}[thm]{Lemma}
\newtheorem{defn}[thm]{Definition}
\newtheorem{preremark}[thm]{Remark}
\newtheorem*{nota}{Notation}
\newenvironment{remark}{\begin{preremark}\rm}{\medskip \end{preremark}}
\numberwithin{equation}{section}
\newcommand{\norm}[1]{\left\Vert#1\right\Vert}
\newcommand{\R}{\mathbb R}
\newcommand{\eps}{\varepsilon}
\newcommand{\grad} {\nabla}
\newcommand{\lap} {\Delta}
\newcommand{\dd} {\; \mathrm{d}}
\newcommand{\Jap}[1]{\langle #1 \rangle}
\DeclareMathOperator{\tr}{tr}
\definecolor{sh}{RGB}{255,0,100}
\begin{document}

\begin{abstract}
    We establish an a priori estimate for the dissipation of the Fisher information for the space-homogeneous Landau equation with very soft potentials. This work is motivated by the recent breakthrough by Guillen and Silvestre \cite{guillen2023landau}, which proves that the Fisher information is monotone decreasing. As a direct consequence, we show that the Fisher information becomes instantaneously bounded, even if it is not initially bounded. This leads to a proof of the global existence of smooth solutions for the space-homogeneous Landau equation with very soft potentials, given initial data $f_0 \in L^1_{2-\gamma} \cap L \log L$. This result includes the case of the Coulomb potential.
\end{abstract}

\title{Dissipation estimates of the Fisher information for the Landau equation}
\author{Sehyun Ji}
\address[Sehyun Ji]{Department of Mathematics, University of Chicago,  Chicago, Illinois 60637, USA}
\email{jise0624@uchicago.edu}
\maketitle

\section{Introduction}
 We study the global existence of smooth solutions for the space-homogeneous Landau equation of the form of
\begin{equation} \label{eq: Landaueq}
    \partial_t f=q(f)
\end{equation}
where the collision operator $q(f)$ is given by
\begin{equation} \label{eq: Landaucollision}
    q(f)(v)= \partial_{v_i} \int_{\R^3} \alpha(|v-w|) a_{ij}(v-w) (\partial_{v_j}-\partial_{w_j})[f(v)f(w)] \dd w.
\end{equation}
Here the interaction potential $\alpha$ is a given nonnegative function and $a_{ij}(z)=|z|^2 \delta_{ij}-z_i z_j$. We are mostly concerned with the case $\alpha(r)=r^\gamma$ with $\gamma \in[-3,-2)$, which is referred as very soft potentials. In particular, this range includes the most important case $\gamma=-3$, the Coulomb potential.
A recent breakthrough by Guillen and Silvestre \cite{guillen2023landau} proves the global existence of smooth solutions to \eqref{eq: Landaueq} for very soft and Coulomb potentials, given $C^1$ initial data with Maxwellian upper bounds. The key insight of their proof is proving the monotonicity of the Fisher information, $i(f)$, which is defined as follows.
\begin{defn}[The Fisher information]
For a function $f:\R^3 \to \R{\ge 0}$, 
\begin{equation}
    i(f):=\int_{\R^3} f(v) |\grad \log f(v)|^2 \dd v=\int_{\R^3} \frac{| \grad f(v)|^2}{f(v)} \dd v =4\int_{\R^3} |\grad \sqrt{f(v)}|^2 \dd v.
\end{equation}
\end{defn}
\noindent
It is well known that a solution to the Landau equation \eqref{eq: Landaueq} conserves the mass, momentum, and energy: 
 \begin{equation}
       (mass):= \int_{\R^3}  f(v) \dd v,
    \end{equation}
    \begin{equation}
        (momentum):=\int_{\R^3} f(v) v \dd v,
    \end{equation}
    \begin{equation}
        (energy):=\int_{\R^3} f(v) |v|^2 \dd v.
    \end{equation}  
Another notable feature of a solution is that its entropy decreases in time:
     \begin{equation}
        (entropy):=h(f)=\int_{\R^3} f(v) \log f(v) \dd v.
    \end{equation}
Throughout the paper, we consider an initial data $f_0$ with mass $M_0$, energy $E_0$, and entropy $H_0$. Then, at any time $t$, the solution $f(t)$ will have mass $M_0$, energy $E_0$, and entropy bounded above by $H_0$.\\

 In this paper, we derive an a priori estimate for the dissipation of the Fisher information, $-\partial_t i(f)$, and use an elementary ODE argument to prove that the Fisher information decreases at a rate of $\frac{1}{t}$ near $t=0$.
 Our main result is the following.

\begin{thm} \label{thm: fisherdissipation}
Suppose $\alpha(r)=r^\gamma$ for $\gamma \in[-3,-2)$. Let $f:[0,T]\times \R^3$ be a solution of the Landau equation \eqref{eq: Landaueq} with initial data $f_0$ that has mass $M_0$, energy $E_0$, and entropy $H_0$. Then,  
    \begin{equation}
    -\partial_t i(f) \ge   c_1\int_{\R^3} \Jap{v}^{\gamma-2} f(v) \norm{\grad^2 \log f(v)}^2  \dd v  - C_1 M_0 \cdot i(f) - C_2 M_0^2.
\end{equation}
Here $C_1,C_2$ are absolute constants and $c_1$ depends on $\gamma, M_0,E_0,H_0$.
\end{thm}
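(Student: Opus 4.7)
The plan is to build directly on the identity used by Guillen and Silvestre \cite{guillen2023landau} to prove monotonicity of $i(f)$: they express $-\partial_t i(f)$ as a double integral over $\R^3 \times \R^3$ whose integrand is a nonnegative quadratic form in $\nabla^2 \log f(v)$, $\nabla^2 \log f(w)$ and certain first-order quantities, weighted by $\alpha(|v-w|) f(v) f(w)$ together with the matrix $a_{ij}(v-w)$. Theorem \ref{thm: fisherdissipation} then amounts to making that identity quantitative: isolating a dominant coercive piece of the desired form, and paying for the remaining cross and lower-order contributions in units of $M_0 \cdot i(f)$ and $M_0^2$.

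I would begin by splitting the Guillen--Silvestre integrand into three types of contributions: (i) ``diagonal'' pieces that are pure quadratics in $\nabla^2 \log f(v)$ weighted by the Landau diffusion matrix $A_{ij}(v) = \int \alpha(|v-w|) a_{ij}(v-w) f(w) \, dw$, (ii) symmetric cross pieces involving Hessians at both $v$ and $w$, and (iii) lower-order terms coupling one Hessian with gradient quantities. For (i), the Desvillettes-type ellipticity estimate for $A$, valid under mass, energy, and entropy bounds, gives a lower bound roughly of the form $A_{ij}(v) \gtrsim c \langle v \rangle^{\gamma+2} P^\perp_{ij}(v) + c \langle v \rangle^\gamma \hat v_i \hat v_j$; after pairing with $\nabla^2 \log f(v)$ and normalizing to reach the form $f(v)\|\nabla^2 \log f(v)\|^2$, the weaker radial direction controls the result, producing the weight $\langle v \rangle^{\gamma-2}$ appearing in the theorem. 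The symmetric cross pieces in (ii) are split by Cauchy--Schwarz and each half is reabsorbed into the diagonal contribution at its own point.

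The lower-order remainders in (iii) are handled by Cauchy--Schwarz in which the small factor is placed on the Hessian and absorbed into the principal term, while the large factor generates a first-order quantity of the form $\int f(v) |\nabla \log f(v)|^2 \Phi(v) \, dv$ with $\Phi$ bounded by a multiple of $M_0$ after the $w$-integration. This produces the $-C_1 M_0 \cdot i(f)$ term; a purely zeroth-order remainder yields $-C_2 M_0^2$ via the mass bound alone. Because these steps rely on the mass bound and elementary matrix arithmetic, $C_1$ and $C_2$ come out as absolute constants, while $c_1$ inherits its dependence on $\gamma, M_0, E_0, H_0$ from the coercivity step.

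The principal obstacle I anticipate is the endpoint case $\gamma = -3$: the kernel $\alpha(r) = r^{-3}$ is non-integrable at the origin, so the bilinear representation of \cite{guillen2023landau} must be used in its full form (exploiting cancellations between the $v$- and $w$-contributions) rather than as a bare pointwise bound. A secondary difficulty is checking that the splitting into diagonal, cross, and lower-order pieces preserves the nonnegativity underlying the Guillen--Silvestre monotonicity, so that estimating each piece in isolation does not destroy the sign of the overall dissipation; this should reduce to a $3\times 3$ matrix inequality slightly strengthened from the one at the heart of \cite{guillen2023landau}.
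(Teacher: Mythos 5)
Your overall strategy (quantify the Guillen--Silvestre dissipation identity, isolate a coercive Hessian term, pay for the rest in $M_0\, i(f)$ and $M_0^2$) is the right shape, but three of your steps fail as described. First, the cross terms. Writing $F=f\otimes f$, the spherical dissipation $D_{sph}$ produces $\bigl|b_i\cdot\nabla_v^2\log f(v)\,b_j + b_i\cdot\nabla_w^2\log f(w)\,b_j+\text{l.o.t.}\bigr|^2$, in which the $v$- and $w$-Hessian contributions have \emph{identical} size by symmetry; your plan to "split the cross pieces by Cauchy--Schwarz and reabsorb each half into the diagonal" gives $|x_v+x_w|^2\ge \tfrac12|x_v|^2-|x_w|^2$, and after symmetrization the right-hand side is $\le 0$, so nothing survives. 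The paper's essential new ingredient is precisely the fix for this: the parallel-direction term $D_{par}$ (discarded in \cite{guillen2023landau}) satisfies the matrix inequality
\begin{equation}
\frac12\begin{pmatrix} I_3 & I_3\\ I_3 & I_3\end{pmatrix}
+\frac{1}{2|v-w|^2}\begin{pmatrix} a & -a\\ -a & a\end{pmatrix}
\ \ge\ \frac{1}{|v-w|^2}\begin{pmatrix} a & 0\\ 0 & a\end{pmatrix},
\end{equation}
which block-diagonalizes the quadratic form and decouples the two Hessians (Lemma \ref{lem: newdissipation}). Without invoking $D_{par}$ in this way you cannot extract the principal term. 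Second, your lower-order remainder is not $O(M_0)$: the weight $\Phi(v)=\int \alpha(|v-w|)(\cdots)f(w)\,\dd w$ with $\alpha(r)=r^\gamma$, $\gamma\in[-3,-2)$, is not controlled by mass (nor by mass, energy, entropy and higher $L^1$ moments). The paper must first replace $\alpha$ by a cutoff $\tilde\alpha\le\min(\alpha,2^{-\gamma})$ near the diagonal — discarding part of the coercive term, which is harmless — so that the remainder carries a bounded kernel and an integration by parts putting $\Delta_v\tilde\alpha$ on the kernel stays finite; this is exactly where $C_1 M_0\, i(f)+C_2M_0^2$ comes from, and it is not a cancellation between $v$- and $w$-contributions.

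Third, the dissipation integrand is \emph{not} pointwise a nonnegative quadratic form: the term $-\sum_k\iint \frac{\alpha'^2}{2\alpha}F|\tilde b_k\cdot\nabla\log F|^2$ is genuinely negative and is absorbed only through the spherical Bakry--\'Emery $\Gamma_2$ inequality $D_{sph}\ge 2\Lambda_3 R_{sph}$ with $\Lambda_3\ge 5.5>\gamma^2/4$, after a change of variables to spheres $|v-w|=\text{const}$; this is an integrated functional inequality, not a $3\times3$ matrix inequality, and only the fraction $1-\gamma^2/(4\Lambda_3)$ of $D_{par}+D_{sph}$ survives to feed the coercivity. Finally, a smaller but real point: the weight $\Jap{v}^{\gamma-2}$ does not follow from the Desvillettes lower bound on $A[f]$ (which gives $\Jap{v}^{\gamma}$ in the worst direction). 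The relevant object is the quadratic form $H\mapsto\int \frac{\tilde\alpha(|v-w|)}{|v-w|^2}\tr\bigl(a(v-w)Ha(v-w)H\bigr)f(w)\,\dd w$, which is quadratic in $a$; the degenerate-direction measure estimate then enters \emph{squared} ($\eps_0^2\sim\Jap{v}^{-4}$ against $\Jap{v}^{\gamma+2}$), and one must rerun the concentration argument for this fourth-order tensor rather than quote the ellipticity of $A$.
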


In \cite{desvillettes2015entropy}, Desvillettes obtained a lower bound for the entropy dissipation in terms of a weighted Fisher information. The weight in the entropy dissipation estimate was later improved by the author \cite{ji2024entropy}. Theorem \ref{thm: fisherdissipation} could be interpreted as a higher order version of that estimate, since we obtained a lower bound of the dissipation of Fisher information in terms of a second order coercive functional.

\begin{remark}
    With minor modifications, Theorem \ref{thm: fisherdissipation} can be extended to  other values of $\gamma$, such as moderately soft and hard potentials. However, the regularity results \cite{ gualdani2019apweights, silvestre2017upperboundLandau, wu2014soft} that are sufficient to establish the global existence of smooth solutions are well known, so we focus on the case of very soft potentials.
\end{remark}
\noindent
If we further impose a bound
\begin{equation} \label{eq: highermomentbound}
    \int_{\R^3} f_0(v) |v|^{2-\gamma} \dd v \le W_0,
\end{equation}
we can derive the following inequality using the Cauhcy-Schwarz inequality:
\begin{equation}
    -\partial_t i(f) \ge   c_2 i(f)^2  +\text{(lower order terms)}
\end{equation}
where $c_2$ only depending on $T,\gamma, M_0,W_0,H_0$. See Corollary \ref{cor: fisherdissipation} and Section 3 for the precise statement and detailed proof. 
By solving the ODE directly, we obtain the following theorem. 
\begin{thm}\label{thm: Fisherdecay}
    Suppose $\alpha(r)=r^\gamma$ for $\gamma \in [-3,-2)$. Let $f:[0,T]\times \R^3 \to \R_{\ge 0}$ be a solution of the Landau equation \eqref{eq: Landaueq} with initial data $f_0$ that has mass $M_0$, energy $E_0$, and entropy $H_0$. Furthermore, assume the bound \eqref{eq: highermomentbound}.
    Then, for every $t \in (0,T]$, a priori estimate for the Fisher information
    \begin{equation}
        i(f(t)) \le C_0 \left( 1+\frac{1}{t} \right)
    \end{equation}
    holds for some $C_0>0$ only depending on $\gamma, T, M_0, W_0, H_0$.
\end{thm}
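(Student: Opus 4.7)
The plan is to convert the estimate of Theorem \ref{thm: fisherdissipation} into a closed differential inequality for $y(t):=i(f(t))$, and then to integrate that inequality.

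\emph{Step 1 (Cauchy--Schwarz).} The integration-by-parts identity
\[
i(f)=\int_{\R^3}\grad f\cdot\grad\log f \dd v=-\int_{\R^3} f\,\Delta\log f \dd v
\]
allows one to split the integrand as $\Jap{v}^{(\gamma-2)/2}\sqrt{f}\,\Delta\log f\cdot\Jap{v}^{(2-\gamma)/2}\sqrt{f}$ and apply Cauchy--Schwarz to obtain
\[
i(f)^2 \le \left(\int_{\R^3}\Jap{v}^{\gamma-2} f\,(\Delta\log f)^2 \dd v\right)\left(\int_{\R^3}\Jap{v}^{2-\gamma} f \dd v\right).
\]
Since $(\Delta\log f)^2\le 3\norm{\grad^2\log f}^2$ in $\R^3$, the first factor is controlled by the coercive quantity appearing on the right-hand side of Theorem \ref{thm: fisherdissipation}. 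The second factor is dominated by $M_0+\int_{\R^3} f|v|^{2-\gamma} \dd v$, and this high-order moment stays bounded on $[0,T]$ by some $W_1=W_1(\gamma,T,M_0,W_0)$ via standard moment-propagation for the Landau equation, starting from hypothesis \eqref{eq: highermomentbound}. Feeding these two bounds back into Theorem \ref{thm: fisherdissipation} yields the closed inequality
\[
y'(t) \le -c_2\,y(t)^2 + C_1 M_0\,y(t) + C_2 M_0^2, \qquad c_2:=\frac{c_1}{3(M_0+W_1)},
\]
which is the content of the Corollary \ref{cor: fisherdissipation} referenced in the excerpt.

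\emph{Step 2 (ODE).} Young's inequality $C_1 M_0\,y\le (c_2/2) y^2 + C_1^2 M_0^2/(2c_2)$ absorbs the linear term, yielding $y'\le -(c_2/2) y^2 + D$ with $D:=C_1^2 M_0^2/(2c_2)+C_2 M_0^2$. Set $y_*:=2\sqrt{D/c_2}$. At $y=y_*$ the right-hand side of this last ODE equals $-D<0$, so $y$ cannot cross $y_*$ from below; for $y>y_*$, it satisfies $y'\le -(c_2/4) y^2$, which when integrated with the initial datum sent to $+\infty$ gives $y(t)\le 4/(c_2 t)$. In either regime
\[
y(t)\le y_*+\frac{4}{c_2 t}\le C_0\Big(1+\frac{1}{t}\Big)
\]
with $C_0$ depending only on $\gamma,T,M_0,W_0,H_0$, which is the claimed bound.

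\emph{Main obstacle.} The Cauchy--Schwarz identity and ODE comparison are essentially routine; the genuine technical input is the propagation of the weighted moment $\int_{\R^3} f|v|^{2-\gamma}\dd v$ (with exponent in $(4,5]$ for $\gamma\in[-3,-2)$) on $[0,T]$, so that the coefficient $c_2$ does not degenerate as $t$ varies. Such moment estimates for weak solutions of the Landau equation with very soft or Coulomb potentials are delicate but standard in the literature, and they are precisely the reason the hypothesis \eqref{eq: highermomentbound} enters the statement through the constant $C_0$.
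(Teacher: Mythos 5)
Your proposal is correct and follows essentially the same route as the paper: the same double Cauchy--Schwarz reduction (via $i(f)=-\int f\,\Delta\log f$ and $(\Delta\log f)^2\le 3\norm{\grad^2\log f}^2$) combined with propagation of the $L^1_{2-\gamma}$ moment to obtain the Riccati-type inequality of Corollary \ref{cor: fisherdissipation}, followed by absorbing the linear term and comparing with the ODE $y'\le -(c_2/4)y^2$. The paper phrases the final step as a proof by contradiction rather than a barrier/comparison argument, but the content is identical.
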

A significant corollary of Theorem \ref{thm: Fisherdecay} is that the Fisher information $i(f)$ becomes bounded instantaneously even if it is initially unbounded. It follows that the $L^3$ norm of $f$ becomes bounded instantaneously as well, thanks to the Sobolev embedding $L^6(\R^3) \hookrightarrow \dot{H}^1(\R^3)$.
As a consequence, we  prove the global existence of smooth solutions to the Landau equation \eqref{eq: Landaueq} with very soft potentials, which includes the Coulomb potential. 

\begin{thm} \label{thm: globalexistence}
    Suppose $\alpha(r)=r^\gamma$ for $\gamma\in[-3,-2)$. Consider an initial data $f_0$ with mass $M_0$, energy $E_0$, and entropy $H_0$. Furthermore, assume the bound \eqref{eq: highermomentbound}.
    Then, there exists a smooth global solution $f:[0,\infty)\times\R^3 \to \R_{\ge 0}$ to the Landau equation \eqref{eq: Landaueq}, which is strictly positive for any positive time $t$. The initial data $f_0$ is achieved in the following sense: for every $\psi \in C^2_c(\R^3)$, the map $t\mapsto \int_{\R^3} f(t) \psi \dd v$ is H\"older continuous of order $\frac12$. 
\end{thm}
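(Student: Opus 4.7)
My plan is to construct the smooth global solution by approximation, exploiting Theorem~\ref{thm: Fisherdecay} to upgrade uniform estimates to $C^\infty$ regularity for $t>0$. First I would mollify the initial data, setting for instance $f_0^\varepsilon := (f_0 * \eta_\varepsilon) + \varepsilon e^{-|v|^2}$ with $\eta_\varepsilon$ a standard mollifier; this produces smooth, strictly positive initial data with a Maxwellian upper bound, and as $\varepsilon\to 0$ the mass, momentum, energy, entropy, and moment $\int f_0^\varepsilon |v|^{2-\gamma}\,dv$ converge to those of $f_0$. The Guillen--Silvestre theorem \cite{guillen2023landau} then furnishes a smooth global solution $f^\varepsilon$ of \eqref{eq: Landaueq} starting from $f_0^\varepsilon$, and classical moment estimates keep $\int f^\varepsilon(t)|v|^{2-\gamma}\,dv$ bounded uniformly on $[0,T]$ for any fixed $T>0$.

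\textbf{Uniform regularity and compactness.} Applying Theorem~\ref{thm: Fisherdecay} to each $f^\varepsilon$ gives the $\varepsilon$-uniform bound $i(f^\varepsilon(t)) \le C_0(1+1/t)$. Since $i(g)=4\|\grad\sqrt{g}\|_{L^2}^2$, Sobolev embedding $\dot H^1(\R^3)\hookrightarrow L^6(\R^3)$ yields $\|f^\varepsilon(t)\|_{L^3(\R^3)}\le C(1+t^{-1/2})$ uniformly in $\varepsilon$. For $\gamma\in[-3,-2)$ this $L^3$ bound is well above the threshold $p>3/(5+\gamma)$ required by the conditional regularity theory of \cite{silvestre2017upperboundLandau, gualdani2019apweights, wu2014soft}, yielding $\varepsilon$-uniform $L^\infty_{\mathrm{loc}}((0,T]\times\R^3)$ bounds. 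A standard Schauder-type bootstrap for the linearized Landau operator then upgrades these to $C^\infty_{\mathrm{loc}}((0,T]\times\R^3)$ bounds, so by Arzel\`a--Ascoli I extract a subsequence converging in $C^\infty_{\mathrm{loc}}$ to a function $f$ that is smooth on $(0,T]\times\R^3$ and solves \eqref{eq: Landaueq} classically. Strict positivity on $(0,T]\times\R^3$ follows from a weak Harnack estimate for the linearized collision operator, and since $T$ is arbitrary the solution extends to $[0,\infty)\times\R^3$.

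\textbf{Initial trace and main obstacle.} For $\psi\in C^2_c(\R^3)$, two integrations by parts recast the weak formulation as
\[
\frac{d}{dt}\int f(t,v)\psi(v)\,dv = \iint |v-w|^{\gamma+2}\, a_{ij}(v-w)\, f(v)f(w)\,\partial_{v_iv_j}\psi(v)\,dv\,dw,
\]
which, in the worst case $\gamma=-3$, is bounded by $C\|\psi\|_{C^2}\|f(t)\|_{L^{6/5}}^2$ via Hardy--Littlewood--Sobolev. Interpolating $L^{6/5}$ between the conserved $L^1$ bound and the $L^3$ bound of order $t^{-1/2}$ gives $\|f(t)\|_{L^{6/5}}^2 \lesssim t^{-1/2}$, which is integrable with primitive of order $t^{1/2}$; combined with the corresponding uniform-in-$\varepsilon$ estimate at the approximation level this yields the required $\tfrac12$--H\"older continuity of $t\mapsto \int f(t,v)\psi(v)\,dv$ at $t=0$. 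The main obstacle I anticipate is the quantitative tracking of the $1/t$ singularity from Theorem~\ref{thm: Fisherdecay} through the nonlinear $L^p$-regularity bootstrap: those results are typically stated assuming the solution is already smooth at the initial time, so verifying that all constants remain $\varepsilon$-uniform across the initial layer $(0,\tau]$ for $\tau$ small is the delicate step.
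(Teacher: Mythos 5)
Your overall architecture (mollify, apply the Guillen--Silvestre existence theorem, use Theorem \ref{thm: Fisherdecay} for an $\eps$-uniform $L^3$ bound, run conditional regularity plus a parabolic bootstrap, pass to the limit, then treat the initial trace separately) is the same as the paper's, and your Step on the initial trace is a genuinely different and legitimate route: the paper controls $\partial_t\int f\psi$ by Cauchy--Schwarz against the entropy dissipation (Villani's $H$-formulation), getting $\tau^{1/2}$ from $\bigl(\int_0^\tau D(f)\,dt\bigr)^{1/2}\bigl(C\tau\bigr)^{1/2}$, whereas you integrate by parts twice and use HLS plus $L^1$--$L^3$ interpolation. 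Your route buys independence from the entropy-dissipation identity but leans on the quantitative $1/t$ blow-up rate of $i(f)$; the paper's route needs only finite entropy. Note a harmless slip: Sobolev gives $\norm{f(t)}_{L^3}=\norm{\sqrt{f(t)}}_{L^6}^2\le C\,i(f(t))\lesssim 1+t^{-1}$, not $1+t^{-1/2}$; interpolation then gives $\norm{f(t)}_{L^{6/5}}^2\lesssim\norm{f}_{L^1}^{3/2}\norm{f}_{L^3}^{1/2}\lesssim t^{-1/2}$, which is exactly integrable to $\tau^{1/2}$, so your conclusion stands.

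There are two genuine gaps. First, in the approximation: $f_0*\eta_\eps+\eps e^{-|v|^2}$ does not in general satisfy the pointwise Maxwellian upper bound $f_0^\eps\le C_0e^{-\beta|v|^2}$ required by Theorem \ref{thm: luisglobal}, since $f_0$ is only assumed in $L^1_{2-\gamma}\cap L\log L$; you must also truncate (the paper takes $f_0^\eps\in C_c^\infty$) while preserving mass, energy, entropy and the $(2-\gamma)$-moment up to controlled constants. Second, and more substantively, the unweighted $L^3$ bound is \emph{not} by itself above the threshold for the conditional regularity theory in the form used here: Theorem \ref{thm: conditional} requires $f\in L^\infty_t L^p_k$ with $p\in(\tfrac{3}{5+\gamma},\tfrac{3}{3+\gamma})$ \emph{and} a weight $k>\tfrac1p\max(3p-3,-\tfrac{3\gamma}{2}-1)$; the weight is what gives $|A[f](v)|\lesssim\Jap{v}^{\gamma+2}$ at infinity, matching the anisotropic lower bound of Lemma \ref{lem: lowerdiffusion}, and without it the ellipticity ratio degenerates as $|v|\to\infty$ and the De Giorgi iteration fails. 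The missing step is to interpolate your $L^3$ bound against the propagated moment $\norm{f^\eps}_{L^\infty_tL^1_{2-\gamma}}$ (which you already have from Theorem \ref{thm: highermoment}) to land in $L^p_k$ with $k=(2-\gamma)\tfrac{3-p}{2p}$, and to verify that an admissible pair $(p,k)$ exists (e.g.\ $p=1.55$ for $\gamma=-3$). With those two repairs your argument closes.
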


In 1998, C\'edric Villani \cite{villani1998newclass} constructed a global-in-time weak solution for the Landau equation with general initial data in $L^1_2 \cap L\log L$, known as an $H$-solution. However, the $H$-solution was defined in such a weak sense that it was even not immediately clear whether it satisfied the weak formulation in the classical sense. Later, it was proved by Desvillettes \cite{desvillettes2015entropy} that the $H$-solution is indeed the weak solution in the classical sense. Recently, Guillen and Silvestre \cite{guillen2023landau} constructed global smooth solutions but their analysis only considered $C^1-$smooth initial data.
Building upon these results, Golding, Gualdani, and Loher \cite{golding2024globalsmoothsolutionslandaucoulomb} build global smooth solutions, assuming initial data belongs to the $L^{3/2}_k$ space. Their argument relies on the local-in-time propagation of the $L^{3/2}$ norm. 
In Theorem \ref{thm: globalexistence}, we extend these results by constructing global smooth solutions to the Landau equation \eqref{eq: Landaueq} under the same minimal assumptions as in Villani's original work, except for a higher $L^1$ moment condition.

In \cite{fournier2009local}, Fournier and Gu\'erin proved the uniqueness of the solution to the Landau equation for $\gamma \in (-3,0)$ if the solution belongs to $L^1_t L^{3/(3+\gamma)}_v(\R^3)$. Fournier \cite{fournier2010uniqueness} also proved the uniqueness of the solution for the case of the Coulomb potential if the solution belongs to $L^1_t L^\infty_v(\R^3)$. In addition, Chern and Gualdani \cite{chern2022uniqueness} established the uniqueness for solutions in $L^\infty(0,T,L^p)$ for $p>3/2$.
Our estimates do not directly imply these assumptions, therefore the uniqueness of the solution is not guaranteed; there might be a bifurcation of the solution at time $t=0$. However, since the solutions become instantaneously smooth, once the value of the solution is determined for a small interval of time, there will be a unique continuation from that point on.

\begin{nota}
    We use the Japanese bracket notation $\Jap{v}:=\sqrt{1+|v|^2}$. The weighted Lebesgue norm $L^p_m$ of $f(v)$ is equal to the $L^p$ norm of $\Jap{v}^m f(v)$.
\end{nota}
\noindent
Using the above notation, the conditions on initial data $f_0$ in Theorem \ref{thm: globalexistence} can be also written as 
\begin{equation}
    f_0 \in L^1_{2-\gamma} \cap L \log L.
\end{equation}
\begin{remark} \label{remark: LlogL}
    It is well known that a function with finite mass, energy, and entropy belongs to $L\log L$. In other words,
    \begin{equation} \label{eq: LlogL}
        \int_{\R^3} f(v) |\log f(v) | \dd v \le C(M_0,E_0,H_0)
    \end{equation}
    holds for some $C(M_0,E_0,H_0)$.
    We provide a short proof in Appendix \ref{appendix : 1} for reader's convenience.
\end{remark}

Very recently, the monotonicity of the Fisher information for the Boltzmann equation is proved by Imbert, Silvestre, and Villani \cite{imbert2024monotonicityfisherinformationboltzmann}. Given the similarity in structure between the Landau and the Boltzmann equations, we can expect an analogous result of Theorem \ref{thm: fisherdissipation} or \ref{thm: Fisherdecay}; however this remains to be addressed in future work.

\section{Preliminaries}
In this section, we review the setup and results from the proof that Fisher information is monotone decreasing, as presented in \cite{guillen2023landau} by Guillen and Silvestre. Throughout, we will mostly adopt the notations used in \cite{guillen2023landau} to maintain consistency with their formulation. 
\subsection{Lifting}
Given a function $f(v)$ on $\R^3$, we can define a lifted function $F(v,w)$ on $\R^6$ as
\begin{equation}
    F(v,w):=(f\otimes f)(v,w)=f(v)f(w).
\end{equation}
Conversely, we can define a projection operator $\pi:L^1(\R^6) \to L^1(\R^3)$ as
\begin{equation}
    \pi F(v):= \int_{\R^3} F(v,w) \dd w.
\end{equation}
Note that if $F$ is a probability distribution, then $\pi F$ is  a marginal distribution of $F$. The key observation is interpreting the Landau equation \eqref{eq: Landaueq} in terms of $F(v,w)$. 
For a function $F(v,w):\R^6 \to \R_{\ge 0}$, define the degenerate elliptic operator
\begin{equation} \label{defn: Qcollision}
    Q(F)(v,w):= (\partial_{v_i} -\partial_{w_i}) [\alpha(|v-w|)a_{ij}(v-w) (\partial_{v_j}-\partial_{w_j})F].
\end{equation}
For any given $f$ on $\R^3$ with mass $1$, consider the following initial value problem
\begin{equation} \label{eq: initialvalue}
    \begin{cases}
        \partial_t F=Q(F),\\
        F(0,v,w)=f(v)f(w).
    \end{cases}
\end{equation}
Note that the function $F(t)$ stays symmetric due to the symmetry of the operator $Q(F)$.
Then, the projected function $\pi F$ solves the Landau equation \eqref{eq: Landaueq} evaluated at $t=0$. That is,
\begin{equation}
        \begin{cases}
        \partial_t (\pi F)\vert_{t=0} =q(\pi F)\vert_{t=0}=q (f),\\
        \pi F(0,v)=f(v).
        \end{cases}
\end{equation}
Now, we relate the Fisher information of $f$ and $F$. We use a notation $I$ for the Fisher information of a function in the lifted variables,
\begin{equation}
    I(F):=\iint_{\R^6} F |\grad \log F|^2 \dd w \dd v.
\end{equation}
Guillen and Silvestre prove the following lemma using the projection inequality
\begin{equation} \label{ineq: lifting}
    i(\pi F) \le \frac{1}{2} I(F),
\end{equation}
where $F$ is symmetric, and the equality holds if $F(v,w)= f(v)f(w)$ for some $f$.
\begin{lemma}[Lemma 3.4 and Remark 3.5 from \cite{guillen2023landau}] \label{lem: liftedFisher}
Let $f$ be a solution of \eqref{eq: Landaueq} and $F$ be a solution of \eqref{eq: initialvalue}. Then, at $t=0$,
    \begin{equation} 
        \partial_t i(f)=\frac12 \partial_t I(F).
    \end{equation}
\end{lemma}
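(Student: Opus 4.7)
The plan is to exploit the fact that $F_0 := f\otimes f$ is a minimizer of the nonnegative functional $J(F) := \tfrac12 I(F) - i(\pi F)$ on the class of symmetric nonnegative integrable functions on $\R^6$. Indeed, the projection inequality \eqref{ineq: lifting} yields $J \ge 0$, while the equality case recalled after \eqref{ineq: lifting} gives $J(F_0) = 0$. Hence $F_0$ is a global minimizer of $J$, so the first variation of $J$ at $F_0$ must vanish along every admissible direction.

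Next, I would apply this first-variation statement to the direction $\psi := Q(F_0)$, which is precisely $\partial_t F(t)\big|_{t=0}$ for the solution of \eqref{eq: initialvalue}. Using linearity of $\pi$ together with the identity $\pi Q(F_0) = q(f)$ — exactly the observation just below \eqref{eq: initialvalue} that $\pi F$ satisfies the Landau equation at $t=0$ — the chain rule gives
\begin{equation*}
0 = DJ(F_0)[Q(F_0)] = \tfrac12 DI(F_0)[Q(F_0)] - Di(f)[q(f)] = \tfrac12 \partial_t I(F(t))\big|_{t=0} - \partial_t i(f(t))\big|_{t=0},
\end{equation*}
which is the claimed identity after rearrangement. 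To justify the vanishing of $DJ(F_0)[Q(F_0)]$ concretely (rather than invoking general variational principles), I would note that for any sufficiently small real $\eps$, the perturbation $F_0 + \eps Q(F_0)$ is again symmetric, nonnegative and integrable, since $Q$ is a symmetric divergence-form operator that preserves mass; thus $J(F_0 + \eps Q(F_0)) \ge 0 = J(F_0)$ forces the one-variable derivative at $\eps = 0$ to be zero from both sides.

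The main obstacle is the rigorous justification of the chain rule above: both $DI(F_0)$ and $Di(f)$ involve expressions like $|\grad F|^2/F^2$ and $\lap F / F$, and the directions $Q(F_0)$ and $q(f)$ themselves carry second-order derivatives, so producing a bona fide Fr\'echet derivative requires controlling integrals of the form $\iint F^{-2}|\grad F|^2\, Q(F)$. In the smooth, rapidly decaying setting used in \cite{guillen2023landau} these integrals are unambiguously finite, so the argument goes through directly. Alternatively, one can bypass Fr\'echet differentiability altogether by expanding both $\partial_t I(F)\big|_{t=0}$ and $\partial_t i(f)\big|_{t=0}$ via integration by parts against the test function $\log F_0 = \log f(v) + \log f(w)$, and then using the $v \leftrightarrow w$ symmetry of $\alpha(|v-w|)a_{ij}(v-w)$ to collapse the $\R^6$ integral into twice the $\R^3$ integral, recovering the factor of $\tfrac12$ by hand.
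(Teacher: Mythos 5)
Your overall strategy coincides with the one the paper points to: it does not reprove this lemma but cites Lemma 3.4 and Remark 3.5 of \cite{guillen2023landau}, indicating that the proof rests on the projection inequality \eqref{ineq: lifting} together with its equality case at $F_0=f\otimes f$. So the variational framing via $J(F)=\tfrac12 I(F)-i(\pi F)\ge 0$, $J(F_0)=0$ is exactly the intended mechanism, and your use of $\pi Q(F_0)=q(f)$ to identify $Di(f)[q(f)]$ with $\partial_t i(f)\vert_{t=0}$ is correct. However, one step in your justification of the \emph{two-sided} first variation fails as stated: the claim that $F_0+\eps Q(F_0)$ is nonnegative for all small real $\eps$. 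Nonnegativity for a fixed $\eps\neq 0$ requires the pointwise bound $|Q(F_0)|\le |\eps|^{-1}F_0$, i.e.\ $Q(F_0)/F_0\in L^\infty$, and this is false for very soft potentials: writing $z=v-w$, the operator $Q$ carries the coefficient $\alpha(|z|)a(z)\sim |z|^{\gamma+2}$ (and its divergence $\sim|z|^{\gamma+1}$), so even for a Gaussian $f$ one finds $Q(F_0)/F_0\sim |z|^{\gamma+2}\bigl(\norm{\grad^2\log f}+|\grad\log f|^2\bigr)$ near the diagonal, which is unbounded for $\gamma\in[-3,-2)$ (like $|z|^{-1}$ in the Coulomb case). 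Mass conservation gives integrability, not positivity. Without a two-sided admissible perturbation, the monotonicity $J(F(t))\ge J(F_0)$ for $t\ge 0$ along the (forward-in-time) flow only yields the one-sided difference-quotient bound $\partial_t i(f)\le \tfrac12\partial_t I(F)$ at $t=0$ -- which is precisely Lemma 3.4 of \cite{guillen2023landau}, but not yet the equality asserted in Remark 3.5 and in the statement here.

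The repair is exactly the ``alternative'' you relegate to your final sentence, and it should be the main argument rather than a bypass: compute $\tfrac12\langle I'(F_0),Q(F_0)\rangle$ and $\langle i'(f),q(f)\rangle$ directly. Because $\log F_0(v,w)=\log f(v)+\log f(w)$ is additive and the kernel $\alpha(|v-w|)a(v-w)$ is symmetric under $v\leftrightarrow w$, integrating by parts collapses the $\R^6$ pairing into twice the $\R^3$ pairing, producing the factor $\tfrac12$ and the claimed identity without ever perturbing $F_0$ off the constraint set. (Alternatively, one can salvage the variational route by restricting to perturbations of the form $F_0\,\psi$ with $\psi$ bounded, or by a truncation of $\alpha$ followed by a limit, but the direct computation is cleaner and is what the cited Remark 3.5 amounts to.) With that substitution your proof is complete and matches the source.
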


\begin{remark}
    Lemma \ref{lem: liftedFisher} remains valid even without normalizing the mass.
    If the mass of $f$ is $M_0$, then we have the relation:
    \begin{equation}
        \frac{1}{2}I(f\otimes f)= M_0 i(f) =i(M_0 f).
    \end{equation}
    However, the projection of $f\otimes f$ is no longer $f$, but $M_0 f$. These factors $M_0$ cancel out, so Lemma \ref{lem: liftedFisher} still holds, independent of the normalization of mass.
\end{remark}

Studying the flow \eqref{eq: initialvalue} is significantly easier than studying the original Landau equation \eqref{eq: Landaueq}, as $Q(F)$ is a classical second-order operator with local coefficients and, most importantly, is linear in $F$.

\subsection{The formula for the dissipation of the Fisher information}
We decompose the matrix $a_{ij}$
\begin{equation}
    a(v-w)=\sum_{k=1}^3 b_k(v-w) \otimes b_k(v-w)^t,
\end{equation}
where the vector field $b_k$ is defined as
    \begin{equation}
        b_k(v-w):= e_k \times  (v-w),
    \end{equation}
with $e_k$ denoting the standard basis of $\R^3$.    
Note that $b_k$ is perpendicular to $v-w$. We define the lifted vector field $\tilde{b}_k$ of $b_k$.
\begin{equation}
        \tilde{b}_k(v-w):=
        \begin{pmatrix}
            b_k(v-w)\\
            -b_k(v-w)
        \end{pmatrix}
        .
\end{equation}
Then, the operator $Q(F)$ can be written as
\begin{equation}
    Q(F)= \sum_{k=1}^3  \tilde{b}_k\cdot \grad ( \alpha \tilde{b}_k \cdot \grad F).
\end{equation}
We state the formula for the dissipation of the Fisher information from \cite{guillen2023landau}.
\begin{prop}[Lemma 8.2 from \cite{guillen2023landau}] \label{prop: Fisher}
    The dissipation of the Fisher information for the initial value problem \eqref{eq: initialvalue} is
    \begin{align}
        \frac12 \partial_t I(F)
        &=\frac12 \Jap{I'(F),Q(F)}\\
        &= \sum_{k=1}^3  \iint_{\R^6} -F|\grad[\sqrt{\alpha}\tilde{b}_k \cdot \grad \log F] |^2 \dd w \dd v+\iint_{\R^6} \frac{\alpha'(|v-w|^2}{2\alpha(|v-w|)} F\left(\tilde{b}_k\cdot \grad \log F \right)^2 \dd w \dd v.
    \end{align}
\end{prop}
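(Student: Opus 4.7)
\emph{First equality.} This is a routine chain rule. Writing $I(F) = \iint_{\R^6} |\grad F|^2/F \dd w \dd v$, its $L^2$ Fréchet derivative is $I'(F) = |\grad\log F|^2 - 2\lap F/F$, so the flow $\partial_t F = Q(F)$ immediately yields $\partial_t I(F) = \Jap{I'(F), Q(F)}$. An integration by parts (which I will perform using that $F$ and its derivatives decay at infinity) equivalently recasts this as
\[
\tfrac12 \partial_t I(F) = -\tfrac12 \iint_{\R^6} F\bigl(|\grad\log F|^2 + 2\lap\log F\bigr)\frac{Q(F)}{F} \dd w \dd v.
\]

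\emph{Exploiting the vector field structure.} For the second equality I decompose $Q(F) = \sum_k L_k F$ with $L_k F := \tilde{b}_k \cdot \grad(\alpha \tilde{b}_k \cdot \grad F)$ and treat each $k$ separately by linearity. Three structural properties of $\tilde{b}_k$ will drive the computation: (i) $\grad \cdot \tilde{b}_k = 0$ on $\R^6$, since $b_k(z) = e_k \times z$ is divergence-free; (ii) $\tilde{b}_k \cdot \grad [\alpha(|v-w|)] = 0$, because $\tilde{b}_k$ is orthogonal to the lifted radial field $(v-w, -(v-w))$; and (iii) the Jacobian $\grad \tilde{b}_k$ is antisymmetric on $\R^6$ (inherited from the cross-product form of $b_k$), whence $\tilde{b}_k \cdot \grad$ commutes with $\lap$ on scalars and annihilates the Christoffel-type correction when commuted with $\grad$ against the symmetric tensor $\grad\log F \otimes \grad\log F$. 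Setting $X_k := \tilde{b}_k \cdot \grad \log F$ and $Y_k := \sqrt{\alpha}\, X_k$, facts (i)--(ii) quickly yield the scalar identity
\[
\frac{L_k F}{F} \;=\; Y_k^2 + \sqrt{\alpha}\, \tilde{b}_k \cdot \grad Y_k.
\]

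\emph{Reduction and completing the square.} Substituting this identity into the formula from the first step and integrating by parts the term containing $\tilde{b}_k \cdot \grad Y_k$, the quadratic contributions $\pm \iint F A Y_k^2$ with $A := |\grad \log F|^2 + 2 \lap \log F$ cancel, and the dissipation reduces to $\tfrac12 \partial_t I(F) = \sum_k \iint F Y_k \sqrt{\alpha}\, \tilde{b}_k \cdot \grad A \dd w \dd v$. Using (iii) I then have $\tilde{b}_k \cdot \grad A = 2\grad \log F \cdot \grad X_k + 2 \lap X_k$, and a further integration by parts gives
\[
\tfrac12 \partial_t I(F) = -\sum_k \iint_{\R^6} F \bigl(\alpha |\grad X_k|^2 + X_k\, \grad \alpha \cdot \grad X_k\bigr) \dd w \dd v.
\]
Comparing with $|\grad Y_k|^2 = \alpha|\grad X_k|^2 + X_k \grad \alpha \cdot \grad X_k + X_k^2 |\grad \sqrt{\alpha}|^2$ completes the square, and the residual $+\iint F X_k^2 |\grad \sqrt{\alpha}|^2$ delivers the claimed weight: a direct calculation gives $|\grad_{v,w}\alpha(|v-w|)|^2 = 2(\alpha'(|v-w|))^2$, so $|\grad \sqrt{\alpha}|^2 = (\alpha'(|v-w|))^2/(2\alpha(|v-w|))$. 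The step I expect to be the main technical hurdle is the bookkeeping of commutators in (iii): without the antisymmetry of $\grad \tilde{b}_k$, the expression $\tilde{b}_k \cdot \grad A$ would contain genuine second-order correction terms, but with antisymmetry in hand they collapse and leave the clean scalar computation sketched above.
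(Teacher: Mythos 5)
Your derivation is correct and is essentially the standard one: the paper itself does not prove this proposition but cites it as Lemma 8.2 of Guillen--Silvestre, and their argument runs along the same lines you sketch (divergence-free, $\alpha$-orthogonal, antisymmetric-Jacobian vector fields $\tilde{b}_k$, integration by parts against $I'(F)=-|\grad\log F|^2-2\lap\log F$, and completing the square in $Y_k=\sqrt{\alpha}\,\tilde{b}_k\cdot\grad\log F$ with residual weight $|\grad\sqrt{\alpha}|^2=\alpha'^2/(2\alpha)$). One bookkeeping slip: after the cancellation of the $\pm\tfrac12\iint FAY_k^2$ terms you should be left with $\tfrac12\partial_t I(F)=\tfrac12\sum_k\iint F\sqrt{\alpha}\,Y_k\,(\tilde{b}_k\cdot\grad A)$, not $\sum_k\iint F\sqrt{\alpha}\,Y_k\,(\tilde{b}_k\cdot\grad A)$; this missing $\tfrac12$ is exactly absorbed by the factor $2$ in $\tilde{b}_k\cdot\grad A=2\grad\log F\cdot\grad X_k+2\lap X_k$, so your final displayed formula and the claimed identity are correct as stated.
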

\noindent
Verifying that the expression from the above proposition is non-positive can be reduced to the functional inequality involving the derivatives of logarithm of a function on $S^2$: Bakry-\'Emery $\Gamma_2$ criterion.

\subsection{Bakry-\'Emery $\Gamma_2$ criterion}
We denote by $\Lambda_3$ the largest constant such that  
\begin{equation} \label{eq: Bakryemery}
     \int_{S^2} f \Gamma_2 (\log f, \log f) \dd \sigma \ge \Lambda_3 \int_{S^2} f |\grad_\sigma \log f|^2 \dd \sigma
\end{equation}
holds for every positive symmetric function $f$ on $S^2$.
Here $\grad_\sigma$ means the spherical derivative at $\sigma \in S^2$ and $\Gamma_2(g,g) = \norm{\grad_\sigma^2 g}^2 +|\grad_\sigma g|^2$ for a function $g$ on $S^2$. In the literature, this functional inequality is called as the Bakry-\'Emery $\Gamma_2$ criterion inequality \eqref{eq: Bakryemery}, which was introduced as a criterion for the logarithmic Sobolev inequality.
Guillen and Silvestre proved that 
\begin{equation}
     -\frac12 \Jap{I'(F),Q(F)} \ge 0
\end{equation}
if the interaction potential $\alpha$ satisfies
     \begin{equation} \label{eq: conditionalpha}
        \sup_{r>0}  \frac{r|\alpha'(r)|}{\alpha(r)} \le \sqrt{4\Lambda_3}.
    \end{equation}
 For $\alpha(r)=r^\gamma$, the left hand side of \eqref{eq: conditionalpha} is equal to $|\gamma|$. A classical result by Bakry-\'Emery \cite{bakry1985diffusion} shows that $\Lambda_3 \ge 2$ without the symmetry condition, but this does not apply to the case of the Coulomb potential.
In \cite{guillen2023landau}, Guillen and Silvestre showed that $\Lambda_3 \ge 4.75$, which is sufficient to cover the Coulomb potential case. Recently, the author improved the lower bound to $\Lambda_3 \ge 5.5$ in \cite{ji2024bakryemery}. We present these results as theorems below.
\begin{thm}[Theorem 1.1 from \cite{ji2024bakryemery}]\label{thm: BakryEmery}
    The largest constant $\Lambda_3$ for the inequality \eqref{eq: Bakryemery} is greater than or equal to $5.5$.
\end{thm}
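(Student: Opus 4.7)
The strategy is to analyze the variational minimum of the Rayleigh-type quotient
\begin{equation}
\mathcal{R}(f) := \frac{\int_{S^2} f\,\Gamma_2(\log f, \log f)\,\dd \sigma}{\int_{S^2} f\,\abs{\grad_\sigma \log f}^2\,\dd \sigma}
\end{equation}
over positive symmetric $f$ on $S^2$ and show its infimum is at least $5.5$. A linearization $f = 1 + \varepsilon \phi$, with $\phi$ symmetric and mean zero, first identifies the correct target: expanding to order $\varepsilon^2$, the ratio becomes $(\int \norm{\grad^2 \phi}^2 + \int \abs{\grad \phi}^2)/\int\abs{\grad \phi}^2$, and the integrated Bochner identity $\int \norm{\grad^2\phi}^2 + \int\abs{\grad\phi}^2 = \int(\Delta\phi)^2$ on $S^2$, together with the decomposition of $\phi$ into the symmetric spherical harmonics $Y_{2k,m}$ of eigenvalue $\lambda_{2k}=2k(2k+1)$, shows that each mode contributes exactly $\lambda_{2k}$ to $\mathcal{R}$. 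The minimum is $\lambda_2 = 6$, giving the upper bound $\Lambda_3 \le 6$, and the plan is to recover $5.5$ in the fully nonlinear regime.

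The next step is to substitute $u := \sqrt{f}$, which is still a positive symmetric function, and rewrite the inequality cleanly. After this change of variables the denominator becomes $4\int\abs{\grad u}^2\,\dd\sigma$, and a direct computation gives
\begin{equation}
\int_{S^2} f\,\Gamma_2(\log f,\log f)\,\dd\sigma = 4\int_{S^2}\abs{\grad u}^2\,\dd\sigma + 4\int_{S^2} \frac{\norm{u\,\grad^2 u - \grad u \otimes \grad u}^2}{u^2}\,\dd\sigma,
\end{equation}
so the theorem reduces to the purely quartic inequality
\begin{equation}
\int_{S^2} \frac{\norm{u\,\grad^2 u - \grad u \otimes \grad u}^2}{u^2}\,\dd\sigma \ge \tfrac{9}{2}\int_{S^2}\abs{\grad u}^2\,\dd\sigma
\end{equation}
for every positive symmetric $u$ on $S^2$. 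I would attack this by decomposing $\grad^2 u$ into its trace and trace-free parts, expanding the numerator, and applying the integrated Bochner identity $\int \norm{\grad^2 u}^2 = \int(\Delta u)^2 - \int \abs{\grad u}^2$ together with the symmetric spectral gap $\int\abs{\grad\psi}^2 \ge 6\int\psi^2$ valid for mean-zero symmetric $\psi$; the latter is the precise gain that the symmetry hypothesis affords over Bakry--\'Emery's pointwise bound.

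The concluding ingredient should be a sharp algebraic identity obtained by completing the square in the expanded integrand, with the completion weight chosen so as to simultaneously balance the $\norm{\grad^2 u}^2$, $\abs{\grad u}^2 \Delta u$, and $\abs{\grad u}^4/u^2$ terms after integration by parts. The main obstacle is controlling the cross term $u\,\grad^2 u : (\grad u \otimes \grad u)$ without sacrificing constants: a routine Young or Cauchy--Schwarz split yields at best the Guillen--Silvestre constant $4.75$, so the improvement to $5.5$ hinges on identifying a single completion-of-squares identity, valid modulo the Ricci terms produced by Bochner, that tightly couples the symmetric spectral gap $6$ with the pointwise Bakry--\'Emery contribution. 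Since the gap between $5.5$ and the linearization ceiling $6$ is narrow, essentially no slack is permitted once that identity is fixed, making the choice of completion weight the core technical challenge.
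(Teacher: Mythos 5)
This statement is imported verbatim from \cite{ji2024bakryemery}; the present paper contains no proof of it, so there is nothing internal to compare against. Judged on its own terms, your proposal is a correct \emph{reduction} followed by a research plan, not a proof. The preliminary steps check out: the linearization around $f\equiv 1$ correctly identifies the symmetric spectral value $\lambda_2=6$ as the ceiling; the substitution $u=\sqrt f$ and the resulting identity for $\int f\,\Gamma_2(\log f,\log f)\,\dd\sigma$ are algebraically right; and the target constant $\tfrac{11}{2}-1=\tfrac92$ in the quartic inequality
\begin{equation}
\int_{S^2}\frac{\norm{u\,\grad^2u-\grad u\otimes\grad u}^2}{u^2}\,\dd\sigma\;\ge\;\tfrac92\int_{S^2}\abs{\grad u}^2\,\dd\sigma
\end{equation}
is exactly what $\Lambda_3\ge 5.5$ requires.

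The gap is that the entire difficulty of the theorem lives in that quartic inequality, and you do not prove it: you state that the argument ``hinges on identifying a single completion-of-squares identity'' whose weight is ``the core technical challenge,'' i.e.\ you name the missing ingredient rather than supply it. Note that after expanding the numerator and integrating by parts the quartic terms $\abs{\grad u}^4/u^2$ cancel and one is left with $\int\norm{\grad^2u}^2+\int u^{-1}\abs{\grad u}^2\Delta u$; the second term has no sign, and any Cauchy--Schwarz on it reintroduces $\int\abs{\grad u}^4/u^2$, which must then be absorbed without losing the narrow margin between $\tfrac92$ and the linearization ceiling $5$. Closing this loop is precisely where \cite{guillen2023landau} obtain $4.75$ and where \cite{ji2024bakryemery} must work harder to reach $5.5$ (via a sharper treatment of the cross term than a one-parameter Young split); as written, your argument establishes no lower bound at all beyond what is already quoted from the literature. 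To make this a proof you would need to exhibit the claimed identity explicitly and verify the resulting coefficient inequalities.
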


\begin{thm}[Theorem 1.1 from \cite{guillen2023landau}, Corollary 1.3 from \cite{ji2024bakryemery}] \label{thm: Fisherdecreaserange}
    Let $f:[0,T]\times \R^3 \to \R_{\ge 0}$ be a solution of the Landau equation \eqref{eq: Landaueq}. 
    If the interaction potential $\alpha$ satisfies
    \begin{equation}
        \sup_{r>0}  \frac{r|\alpha'(r)|}{\alpha(r)} \le \sqrt{22},
    \end{equation}
    then the Fisher information $i(f)$ is decreasing in time.
    In particular, the Fisher information is monotone decreasing in time for very soft and Coulomb potentials.
\end{thm}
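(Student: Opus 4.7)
The plan is to reduce the monotonicity of $i(f)$ along the Landau flow to a spherical functional inequality of Bakry-\'Emery $\Gamma_2$ type, following the framework of \cite{guillen2023landau}, and then feed in the improved constant $\Lambda_3 \ge 5.5$ from Theorem \ref{thm: BakryEmery} to cover the full very soft and Coulomb range. It suffices to show that for every $t_0 \in [0,T]$, $\partial_t i(f)(t_0) \le 0$. Fix such a $t_0$, set $g := f(t_0,\cdot)$, and let $F(t,v,w)$ solve the linear parabolic initial value problem \eqref{eq: initialvalue} with datum $G(v,w):=g(v)g(w)$. By Lemma \ref{lem: liftedFisher}, $\partial_t i(f)(t_0)=\tfrac12 \partial_t I(F)\vert_{t=0}$, so the task reduces to proving $\partial_t I(F)\vert_{t=0}\le 0$ for tensorized (hence symmetric) initial data.

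Next, apply Proposition \ref{prop: Fisher} to decompose $\tfrac12 \partial_t I(F)$ as a sum over $k=1,2,3$ of a negative ``curvature'' term involving $-F|\grad(\sqrt{\alpha}\tilde b_k \cdot \grad \log F)|^2$ and a positive lower-order term proportional to $(\alpha'/\alpha)F(\tilde b_k\cdot\grad\log F)^2$. Because $\tilde b_k(v-w)=(e_k\times(v-w),-e_k\times(v-w))$ generates a diagonal rotation on the pair $(v,w)$ about the axis through them, the operator $\tilde b_k\cdot\grad$ acts as a spherical derivative on the orbits of the rotation group acting on such pairs at fixed $|v-w|$. Parametrizing each orbit by $S^2$ and summing over $k$, the sought pointwise inequality between the two integrands reduces to a relation of the form
\begin{equation}
    \int_{S^2} \varphi\, \Gamma_2(\log\varphi,\log\varphi)\, \dd\sigma \;\ge\; \frac14\left(\sup_{r>0}\frac{r|\alpha'(r)|}{\alpha(r)}\right)^{\!2} \int_{S^2}\varphi\,|\grad_\sigma\log\varphi|^2\,\dd\sigma
\end{equation}
for the positive function $\varphi$ on $S^2$ obtained by restricting $F$ to each orbit, which inherits symmetry from $F=g\otimes g$.

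The main obstacle is precisely this geometric reduction: properly parametrizing the orbits, identifying the trace $\varphi$ as a symmetric positive function on $S^2$, tracking constants through the change of variables, and isolating $r|\alpha'(r)|/\alpha(r)$ as the sharp coefficient on the right. Once this reduction is in place, the required inequality is exactly the Bakry-\'Emery criterion \eqref{eq: Bakryemery} with constant $\Lambda_3$, and symmetry of $\varphi$ is crucial, since the classical Bakry-\'Emery bound $\Lambda_3\ge 2$ without symmetry is insufficient to accommodate $\gamma=-3$.

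Finally, specialize to $\alpha(r)=r^\gamma$, so that $r|\alpha'(r)|/\alpha(r)=|\gamma|$. Under the hypothesis $\sup_r r|\alpha'|/\alpha\le\sqrt{22}=\sqrt{4\cdot 5.5}$, Theorem \ref{thm: BakryEmery} gives $\partial_t I(F)\vert_{t=0}\le 0$, hence $\partial_t i(f)(t_0)\le 0$, yielding monotone decay. Since $|\gamma|\le 3<\sqrt{22}\approx 4.69$ for every $\gamma\in[-3,-2)$, this covers all very soft potentials including the Coulomb case.
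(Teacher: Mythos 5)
Your proposal follows essentially the same route as the paper's sketch (and the underlying argument of \cite{guillen2023landau}): lift via Lemma \ref{lem: liftedFisher}, apply Proposition \ref{prop: Fisher}, reduce to the symmetric Bakry--\'Emery criterion \eqref{eq: Bakryemery} on $S^2$, and invoke $\Lambda_3\ge 5.5$ from Theorem \ref{thm: BakryEmery} to obtain the threshold $\sqrt{4\Lambda_3}=\sqrt{22}$. The one step you understate is that the ``geometric reduction'' you flag as the main obstacle is carried out by the orthogonal decomposition of Lemma \ref{lem: decomposegradient}, which splits $-\tfrac12\langle I'(F),Q(F)\rangle$ into $D_{par}+D_{rad}+D_{sph}$ minus the lower-order term; the nonnegative $D_{par}$ and $D_{rad}$ are simply discarded, and only the spherical part satisfies $D_{sph}\ge 2\Lambda_3 R_{sph}$, which is then weighed against the bad term exactly as you compute.
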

See \cite{bakry2014book,ji2024bakryemery} and the references therein for the relationship between the inequality \eqref{eq: Bakryemery} and the logarithmic Sobolev inequality, as well as for further discussion. See Section 3 for a more detailed explanation of the proof of Theorem \ref{thm: Fisherdecreaserange}.
From Theorem \ref{thm: Fisherdecreaserange}, Guillen and Silvestre prove the global existence of smooth solutions for $C^1$ initial data with Maxwellian upper bounds.
\begin{thm}[Theorem 1.2 from \cite{guillen2023landau}] \label{thm: luisglobal}
    Suppose $\alpha(r)=r^\gamma$ for $\gamma\in [-3,-2)$. Let $f_0: \R^3 \to \R_{\ge 0}$ be an initial data that is $C^1$ and is bounded by a Maxwellian in the sense that
    \begin{equation}
        f_0(v) \le C_0 \exp(-\beta |v|^2),
    \end{equation}
    for some positive parameters $C_0$ and $\beta$. Assume also that $i(f_0) <+\infty$. Then, there is a unique global classical solution $f:[0,\infty)\times \R^3 \to \R_{\ge 0}$ to the Landau equation \eqref{eq: Landaueq}, with initial data $f(0,v)=f_0(v)$. For any $t>0$, this function $f$ is strictly positive, in the Schwartz space, and bounded above by a Maxwellian. The Fisher information $i(f)$ is non-increasing in time.
\end{thm}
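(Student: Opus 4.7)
The plan is to combine an approximation/compactness argument with the Fisher-information monotonicity from Theorem \ref{thm: Fisherdecreaserange} as the decisive a priori bound. First I would regularize the equation, replacing $\alpha(r)=r^\gamma$ by a smoothed less singular version such as $\alpha_\eps(r)=(r^2+\eps^2)^{\gamma/2}$, and regularize the initial data to $f_0^\eps$ that is smooth, Schwartz, Maxwellian-bounded and satisfies $i(f_0^\eps)\to i(f_0)$. For each fixed $\eps>0$ the regularized collision operator is a classical non-degenerate (in directions transverse to $v-w$) second-order operator with smooth coefficients, so a standard fixed-point argument in a weighted parabolic H\"older (or weighted Sobolev) space produces a local classical solution $f^\eps$; routine moment-propagation arguments push local existence to global once the basic bounds below are in place.

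Next I would propagate $\eps$-uniform a priori estimates: (i) conservation of mass, momentum, energy; (ii) entropy non-increase and hence the $L\log L$ bound from Remark \ref{remark: LlogL}; (iii) Fisher-information monotonicity $i(f^\eps(t))\le i(f_0^\eps)$, which is the conclusion of Theorem \ref{thm: Fisherdecreaserange} applied to the regularized equation — here one must check that $\sup_{r>0}r|\alpha_\eps'(r)|/\alpha_\eps(r)\le|\gamma|\le\sqrt{22}$ holds uniformly in $\eps$, which is straightforward by direct computation; (iv) a Maxwellian upper bound that propagates globally. For (iv), the Fisher bound gives
\begin{equation}
    \|\sqrt{f^\eps(t)}\|_{\dot H^1}^2=\tfrac14 i(f^\eps(t))\le \tfrac14 i(f_0),
\end{equation}
hence $\|f^\eps(t)\|_{L^3}\le C$ via the Sobolev embedding $\dot H^1(\R^3)\hookrightarrow L^6(\R^3)$, which lies above the threshold at which the conditional upper-bound estimates of Silvestre \cite{silvestre2017upperboundLandau} and Gualdani--Guillen \cite{gualdani2019apweights} apply. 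Together with moment propagation these give a pointwise Maxwellian bound $f^\eps(t,v)\le C(t)\exp(-\beta(t)|v|^2)$ with constants depending only on the conserved quantities, $i(f_0)$, and $T$.

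I would then pass to the limit $\eps\to 0$: the uniform mass/energy/entropy controls and the locally uniform smoothness from (iv) (combined with the Schauder-type parabolic regularity theory for the Landau equation coming from uniform ellipticity on each compact velocity set once $f^\eps$ is bounded above and below) give strong compactness on $[0,T]\times\R^3$, so a subsequence $f^\eps\to f$ converges to a classical solution satisfying all the same bounds, with $i(f(t))$ non-increasing by Fatou. Uniqueness follows from Fournier \cite{fournier2010uniqueness} (Coulomb) or Fournier--Gu\'erin \cite{fournier2009local} ($-3<\gamma<-2$) once the Maxwellian upper bound is known. Schwartz decay is obtained by propagating polynomial moments of all orders (standard given the upper bound) and bootstrapping Schauder estimates in the linear PDE satisfied by $f$ with its own frozen coefficients. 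Strict positivity for $t>0$ follows from a parabolic strong minimum principle applied to this same linearized equation, whose zeroth-order and drift coefficients are controlled by the now-smooth $f$.

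The main obstacle I anticipate is step (iv): globally propagating a Maxwellian upper bound in the very soft or Coulomb case. The critical $L^p$ threshold for conditional regularity in Landau with $\gamma=-3$ is essentially $p=3/2$, and Fisher-information monotonicity provides $L^3$ control — so one is strictly above threshold but must use this margin carefully, combining the conditional upper-bound estimates of \cite{silvestre2017upperboundLandau} with a carefully chosen, slowly spreading Maxwellian barrier whose spread rate is calibrated against the $L^3$ bound obtained from $i(f_0)$. A secondary delicate point is ensuring that the regularized potential $\alpha_\eps$ meets the Bakry--\'Emery condition uniformly in $\eps$; for $\gamma=-3$ this is tight and requires the sharp $\Lambda_3\ge 5.5$ of Theorem \ref{thm: BakryEmery}.
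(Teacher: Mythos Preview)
This theorem is not proved in the present paper at all: it is quoted verbatim as Theorem~1.2 of Guillen--Silvestre \cite{guillen2023landau} and used as a black box (specifically in Step~1 of the proof of Theorem~\ref{thm: globalexistence}). There is therefore no in-paper proof to compare your proposal against.

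That said, your sketch is a fair high-level outline of the strategy in \cite{guillen2023landau}: regularize, use Fisher monotonicity to get uniform $L^3$ control via Sobolev, feed this into the conditional regularity machinery of \cite{silvestre2017upperboundLandau,gualdani2019apweights} to obtain $L^\infty$ and Maxwellian bounds, then bootstrap to Schwartz regularity and pass to the limit. One minor correction: your closing worry that the Coulomb case $\gamma=-3$ is ``tight'' and requires $\Lambda_3\ge 5.5$ is misplaced. For $\alpha_\eps(r)=(r^2+\eps^2)^{\gamma/2}$ one computes $r\alpha_\eps'(r)/\alpha_\eps(r)=\gamma r^2/(r^2+\eps^2)$, whose supremum is $|\gamma|\le 3$, uniformly in $\eps$. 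The condition \eqref{eq: conditionalpha} then only asks for $\Lambda_3\ge 9/4$, so already Guillen--Silvestre's $\Lambda_3\ge 4.75$ is comfortably sufficient; the sharper bound of Theorem~\ref{thm: BakryEmery} is not needed here.
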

\subsection{Known results}
We review results from the literature that were known prior to the breakthrough by Guillen and Silvestre. These results\textemdash such as  conditional regularity results, the propagation of higher $L^1$ moments, and entropy dissipation estimates\textemdash are essential for proving the global existence of smooth solutions. Throughout this section, we focus on the case $\alpha(r)=r^\gamma$ for $\gamma \in [-3,-2)$.

We start with a concentration-type lemma from \cite{silvestre2016newregularization} or \cite{silvestre2017upperboundLandau}. The heuristic is that bounded energy prevents mass from escaping to the infinity, while bounded entropy prevents mass from accumulating on a small set.
\begin{lemma}[Lemma 4.6 from \cite{silvestre2016newregularization} or Lemma 3.3 from \cite{silvestre2017upperboundLandau}] \label{lem: concentration}
    Suppose that $f$ has mass $M_0$, energy $E_0$ and entropy bounded above by $H_0$. Then, there exist positive numbers $R,\mu,l$ depending on $M_0,E_0,H_0$ such that
    \begin{equation}
        | \{ v\in B_R(0):  f(v) \ge l \} | \ge \mu.
    \end{equation}   
\end{lemma}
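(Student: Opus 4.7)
The plan is to combine two standard moment-versus-concentration arguments: the energy bound prevents mass from escaping to infinity, and the entropy bound prevents mass from concentrating on a set of arbitrarily small measure. No PDE machinery is needed; only elementary real analysis applied to $f$ at a fixed time.

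First, I would localize. By Markov's inequality applied to $|v|^2$ one has $\int_{|v|>R} f(v)\,dv \le E_0/R^2$, so choosing $R = R(M_0,E_0)$ large enough guarantees
\begin{equation}
\int_{B_R(0)} f(v)\,dv \ge \tfrac12 M_0.
\end{equation}
Second, I would strip off the very small values of $f$: pick $l > 0$ small enough that $l\,|B_R| \le M_0/4$. Then $\int_{B_R \cap \{f<l\}} f\,dv \le M_0/4$, and so the set $A := \{v \in B_R : f(v) \ge l\}$ already carries mass
\begin{equation}
m := \int_A f(v)\,dv \ge \tfrac14 M_0.
\end{equation}

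Third, I would use the entropy bound to prevent $A$ from being too thin. Applying Jensen's inequality with the convex function $\phi(x) = x\log x$ on $(A, dv/|A|)$ yields
\begin{equation}
\frac{m}{|A|}\log\frac{m}{|A|} \le \frac{1}{|A|}\int_A f\log f\,dv,
\end{equation}
and hence $m\log(m/|A|) \le \int_A f\log f\,dv \le \int_{\R^3} f|\log f|\,dv$. By Remark \ref{remark: LlogL} the right-hand side is bounded by some $C = C(M_0,E_0,H_0)$, so $\log(m/|A|) \le 4C/M_0$, giving the lower bound $|A| \ge (M_0/4)\exp(-4C/M_0) =: \mu$.

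There is no real obstacle; the only small subtlety is checking that $\int_A f\log f\,dv$ is controlled from above even when $l < 1$ (so $f\log f$ can take negative values on $A$), which is immediate from $\int_A f\log f\,dv \le \int_A f|\log f|\,dv \le \int_{\R^3} f|\log f|\,dv$ together with the $L\log L$ estimate of Remark \ref{remark: LlogL}. With that, the three parameters $R$, $l$, $\mu$ are explicit functions of $M_0, E_0, H_0$, as required.
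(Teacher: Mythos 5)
Your proof is correct. Note that the paper does not prove this lemma at all -- it is quoted from \cite{silvestre2016newregularization, silvestre2017upperboundLandau} -- so the relevant comparison is with the standard argument in those references. Your first two steps (Chebyshev with the energy to confine half the mass to $B_R$, then discarding the set $\{f<l\}$ whose mass is at most $l\,|B_R|$) are exactly the standard ones. Where you diverge is the final step: the usual proof introduces a third truncation level $K$, bounds $\int_{\{f>K\}} f \le C/\log K$ using the $L\log L$ control, and concludes $|\{l\le f\le K\}\cap B_R|\ge M_0/(4K)$ from the trivial bound $f\le K$ on that set; you instead apply Jensen's inequality for $x\mapsto x\log x$ on $A$ itself to convert the $L\log L$ bound directly into a lower bound on $|A|$. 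Your route is slightly slicker (one fewer parameter) and gives an explicit $\mu=(M_0/4)e^{-4C/M_0}$; the only points worth making explicit are that $|A|>0$ is automatic from $m\ge M_0/4>0$, and that the step from $m\log(m/|A|)\le C$ to $\log(m/|A|)\le 4C/M_0$ is harmless even when the logarithm is negative, since $C\ge 0$. Both of these are immediate, so the argument is complete.
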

\noindent
From Lemma \ref{lem: concentration}, we can derive the lower bound of the diffusion matrix $A[f]:=f\ast (|\cdot|^\gamma a_{ij})$.
\begin{lemma}[Lemma 3.2 from \cite{silvestre2017upperboundLandau}] \label{lem: lowerdiffusion}
   Under the same assumptions in Lemma \ref{lem: concentration},
     \begin{equation}
        A[f](v) \ge c_0 \left( \Jap{v}^{\gamma} \left(\frac{v}{|v|}\right)^{\otimes 2} +\Jap{v}^{\gamma+2} \left[I_3-\left(\frac{v}{|v|}\right)^{\otimes 2} \right]\right)
    \end{equation}
    for every nonzero $v\in \R^3$,
    where $c_0$ depending on $M_0, E_0, H_0$.
    In particular,
    \begin{equation}
        A[f](v) \ge c_0 \Jap{v}^{\gamma} I_3.
    \end{equation}
\end{lemma}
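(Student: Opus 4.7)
The plan is to rewrite the diffusion matrix using the identity $|z|^\gamma a(z)=|z|^{\gamma+2}(I_3-\hat z\hat z^T)$ with $\hat z=z/|z|$, so that for every unit vector $\xi$,
\[
\xi^T A[f](v)\xi=\int_{\R^3} f(w)\,|v-w|^\gamma\bigl(|v-w|^2-(\xi\cdot(v-w))^2\bigr)\dd w.
\]
The task then reduces to estimating this quadratic form from below, separately for $\xi\perp v$ and for $\xi=v/|v|$. Every integral I keep will be restricted to the concentration set $\Omega\subset B_R(0)$ with $|\Omega|\ge\mu$ and $f\ge l$ on $\Omega$ produced by Lemma \ref{lem: concentration}.

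For $\xi\perp v$ the estimate is straightforward. I will exploit that $\xi\cdot(v-w)=-\xi\cdot w$ is bounded by $R$ on $\Omega$, together with the comparability $|v-w|\asymp|v|$ for $w\in\Omega$ and $|v|\ge 4R$, to conclude $|v-w|^2-(\xi\cdot(v-w))^2\gtrsim |v|^2$ and $|v-w|^\gamma\asymp\Jap{v}^\gamma$. These give $\xi^T A[f](v)\xi\gtrsim l\mu\Jap{v}^{\gamma+2}$ in this large-$|v|$ regime.

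The parallel case $\xi=v/|v|$ is the main obstacle. A direct computation collapses the quadratic form to $\int f(w)\,|v-w|^\gamma\,|P_{v^\perp}w|^2\dd w$, where $P_{v^\perp}w$ denotes the component of $w$ orthogonal to $v$. This integral would vanish if $\Omega$ sat on the line $\R v$ through the origin, so the argument needs a geometric input showing that a set of measure $\mu$ in $B_R$ cannot concentrate too close to any given line. Quantitatively, the tube $C_\eta:=\{w\in B_R:|P_{v^\perp}w|<\eta\}$ has Lebesgue measure at most $2\pi R\eta^2$, so choosing $\eta=\eta(R,\mu)>0$ with $2\pi R\eta^2<\mu/2$ forces $|\Omega\setminus C_\eta|\ge\mu/2$. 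Integrating over $\Omega\setminus C_\eta$, where $|P_{v^\perp}w|^2\ge\eta^2$ and $|v-w|^\gamma\gtrsim\Jap{v}^\gamma$, produces the $\Jap{v}^\gamma$ lower bound in the parallel direction.

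Finally, for bounded $|v|\le 4R$, applying the same cylinder argument uniformly in $\xi$ (now replacing $|P_{v^\perp}w|^2$ by $|P_{\xi^\perp}(v-w)|^2$, which is controlled by the measure of a tube of radius $\eta$ around the line $v+\R\xi$ inside $B_R$) yields a uniform positive lower bound on $\xi^T A[f](v)\xi$, matching $\Jap{v}^{\gamma+2}\asymp 1$ in this bounded regime. Splicing the two regimes yields the full estimate with $c_0=c_0(\gamma,M_0,E_0,H_0)$, and the weaker trace bound $A[f](v)\ge c_0\Jap{v}^\gamma I_3$ follows since $\gamma+2>\gamma$.
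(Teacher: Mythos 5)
Your strategy---restrict all integrals to the concentration set $\Omega$ produced by Lemma \ref{lem: concentration} and estimate from below the measure of the ``degenerate tube'' on which the quadratic form $\xi^{T}a(v-w)\xi=|v-w|^{2}-((v-w)\cdot\xi)^{2}$ is small---is the right one; it is the same mechanism the paper itself runs for the analogous coercivity estimate \eqref{eq: newprop} (the lemma itself is only cited there, from Silvestre's work). Your individual computations are correct: the identity $\hat v^{T}a(v-w)\hat v=|P_{v^{\perp}}w|^{2}$ with $\hat v=v/|v|$, the cylinder bound $|C_{\eta}|\le 2\pi R\eta^{2}$, the comparability $|v-w|\asymp|v|$ on $B_R(0)$ for $|v|\ge 4R$, and the uniform treatment of the bounded regime.

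The gap is in the reduction: for $|v|\ge 4R$ you verify the quadratic form only for $\xi=\hat v$ and for $\xi\perp v$, and this does \emph{not} imply the asserted matrix inequality $A[f](v)\ge c_0\bigl(\Jap{v}^{\gamma}\hat v\hat v^{T}+\Jap{v}^{\gamma+2}(I_3-\hat v\hat v^{T})\bigr)$. Lower bounds on $\hat v^{T}A\hat v$ and on $u^{T}Au$ for $u\perp v$ control only the diagonal of $A$ in that frame, not the off-diagonal part: already in two dimensions the rank-one positive semidefinite matrix
\begin{equation}
  A=\begin{pmatrix}\lambda_1&\sqrt{\lambda_1\lambda_2}\\ \sqrt{\lambda_1\lambda_2}&\lambda_2\end{pmatrix}
\end{equation}
has $e_1^{T}Ae_1=\lambda_1$ and $e_2^{T}Ae_2=\lambda_2$, yet $A-c\,\diag(\lambda_1,\lambda_2)$ has determinant $\lambda_1\lambda_2\bigl((1-c)^{2}-1\bigr)<0$ for every $c\in(0,1]$, so no inequality $A\ge c\,\diag(\lambda_1,\lambda_2)$ holds. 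You must therefore bound $\xi^{T}A[f](v)\xi$ for an \emph{arbitrary} unit vector $\xi$ by $c_0\bigl(\Jap{v}^{\gamma}\cos^{2}\theta+\Jap{v}^{\gamma+2}\sin^{2}\theta\bigr)$, where $\theta$ is the angle between $\xi$ and $v$. The fix uses only your existing tools: note that $\xi^{T}a(v-w)\xi=\dist(w,\,v+\R\xi)^{2}$. If $|v|\sin\theta\ge 2R$, then every $w\in B_R(0)$ satisfies $\dist(w,v+\R\xi)\ge|v|\sin\theta-R\ge\tfrac12|v|\sin\theta$, which gives $\xi^{T}A\xi\gtrsim l\mu\,\Jap{v}^{\gamma}|v|^{2}\sin^{2}\theta$; this dominates both $\Jap{v}^{\gamma+2}\sin^{2}\theta$ and (since $|v|^{2}\sin^{2}\theta\ge 4R^{2}$) $\Jap{v}^{\gamma}\cos^{2}\theta$. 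If $|v|\sin\theta<2R$, then $\Jap{v}^{\gamma+2}\sin^{2}\theta\lesssim R^{2}\Jap{v}^{\gamma}$ and the target reduces to $\xi^{T}A\xi\gtrsim\Jap{v}^{\gamma}$, which your cylinder argument applied to the tube around the line $v+\R\xi$ (rather than $\R v$) delivers verbatim. This uniform-in-direction treatment is exactly what the paper does in proving \eqref{eq: coercivitiy2}, where the degenerate set is measured with a single, direction-independent threshold $\eps_0\approx\frac{\mu}{2R}(|v|+R)^{-2}$.
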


Next, we review estimates of the upper bounds for the diffusion matrix $A[f]$, citing the following results from \cite{bowman2024isotropiclandau} and \cite{silvestre2017upperboundLandau}: 

\begin{lemma}[Lemma A.3 from \cite{bowman2024isotropiclandau}] \label{lem: improveweight}
    Let $1 \le p \le \infty$. Suppose $f \in L^p_k(\R^3) $ for $k>3-\frac{3}{p}$. Then, for $\frac3p-3<\mu<0$,
    \begin{equation}
        \norm{f\ast |\cdot|^\mu}_{L^\infty(\R^3)} \le C(\mu, p,k) \Jap{v}^{\mu} \norm{f}_{L^p_k(\R^3)}.
    \end{equation}
    for some $C(\mu, p,k)>0$.
\end{lemma}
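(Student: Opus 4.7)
The plan is a classical three-region split of the convolution $(f * |\cdot|^\mu)(v) = \int f(w) |v-w|^\mu \dd w$, with the decay $\Jap{v}^\mu$ emerging from different mechanisms in different regions. The hypotheses $k > 3 - 3/p = 3/p'$ and $\mu > -3/p' = 3/p - 3$ (with $p' = p/(p-1)$) are precisely the Hölder integrability thresholds for $\Jap{\cdot}^{-k}$ and $|\cdot|^\mu$ respectively, and both will enter the argument.

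For $|v| \le 2$ the target is simply $(f * |\cdot|^\mu)(v) \lesssim \norm{f}_{L^p_k}$ since $\Jap{v}^\mu \sim 1$. I would split $\R^3$ into $\set{|v-w| \le 1}$ and its complement. The near piece is handled by Hölder with $|\cdot|^\mu \in L^{p'}(B_1)$, using $\mu p' > -3$. On the far piece, $|v-w|^\mu \le 1$ reduces the integral to $\norm{f}_{L^1}$, which is bounded by $\norm{f}_{L^p_k} \cdot \norm{\Jap{\cdot}^{-k}}_{L^{p'}}$, the second factor being finite by $kp' > 3$.

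For $|v| > 2$, I would split according to where $w$ lies: $A = \set{|w| \le |v|/2}$, $B = \set{|v|/2 < |w| \le 2|v|}$, $C = \set{|w| > 2|v|}$. On $A$ and $C$ the triangle inequality gives $|v-w| \gtrsim |v|$, so $|v-w|^\mu \lesssim \Jap{v}^\mu$ can be pulled out and the rest reduces to $\norm{f}_{L^1} \lesssim \norm{f}_{L^p_k}$ as before. The delicate region is $B$: both $w$ and $v$ have comparable magnitude, and $|v-w|$ can range from small to the order of $|v|$. Here I would apply Hölder, using $\Jap{w} \sim \Jap{v}$ on $B$ to get $\norm{f}_{L^p(B)} \lesssim \Jap{v}^{-k} \norm{f}_{L^p_k}$, paired with $\norm{|v-\cdot|^\mu}_{L^{p'}(B)} \lesssim |v|^{\mu + 3/p'}$ coming from $\int_{|z|\le 3|v|} |z|^{\mu p'} \dd z \lesssim |v|^{\mu p' + 3}$ (finite near $0$ by $\mu p' > -3$). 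Multiplying gives $\Jap{v}^{\mu - k + 3/p'} \norm{f}_{L^p_k}$, which is at most $\Jap{v}^\mu \norm{f}_{L^p_k}$ exactly because $k \ge 3/p'$. This use of the hypothesis on $k$ is sharp, and arranging the decomposition so that this tight estimate comes out cleanly is the point I expect will require the most care.
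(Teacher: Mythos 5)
Your argument is correct. The paper does not prove this lemma itself (it is quoted from \cite{bowman2024isotropiclandau}), but your decomposition — a near/far split for $|v|\le 2$ and the three annular regions $|w|\le |v|/2$, $|v|/2<|w|\le 2|v|$, $|w|>2|v|$ for large $|v|$, with H\"older applied on the middle region using $\Jap{w}\sim\Jap{v}$ and $\mu p'>-3$, and the exponent bookkeeping $\mu+3/p'-k\le \mu$ from $k>3/p'$ — is the standard proof of such weighted convolution estimates and matches how the cited reference proceeds. The only caveat is cosmetic: the statement as printed mixes an $L^\infty$ norm on the left with a $v$-dependent weight on the right; your reading of it as a pointwise bound in $v$ is the intended one.
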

\begin{lemma}[From \cite{silvestre2017upperboundLandau}] \label{lem: interpolation2}
    Suppose $f\in L^1(\R^3) \cap L^\infty(\R^3)$. Then, for $-3<\mu<0$,
    \begin{equation}
        \norm{f\ast |\cdot|^\mu}_{L^\infty(\R^3)} \le C(\mu) \norm{f}_{L^1(\R^3)}^{1+\frac{\mu}{3}} \norm{f}_{L^\infty(\R^3)}^{-\frac{\mu}{3}}
    \end{equation}
    for some $C(\mu)>0$.
\end{lemma}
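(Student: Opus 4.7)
The plan is to prove this by a standard layer-cake (dyadic) splitting of the convolution integral at a radius $R>0$ to be optimized, writing
\begin{equation}
 (f\ast |\cdot|^\mu)(v)=\int_{|v-w|\le R} f(w)|v-w|^\mu \dd w + \int_{|v-w|>R} f(w)|v-w|^\mu \dd w.
\end{equation}
On the inner piece I would pull out $\|f\|_{L^\infty}$ and use that $|z|^\mu$ is locally integrable near the origin since $\mu>-3$: computing in spherical coordinates,
\begin{equation}
 \int_{|z|\le R} |z|^\mu \dd z = 4\pi \int_0^R r^{\mu+2}\dd r = \frac{4\pi}{\mu+3} R^{\mu+3},
\end{equation}
so the inner integral is bounded by $\tfrac{4\pi}{\mu+3} R^{\mu+3}\|f\|_{L^\infty}$. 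On the outer piece, since $\mu<0$ the function $|z|^\mu$ is decreasing in $|z|$, so $|v-w|^\mu\le R^\mu$ on $\{|v-w|>R\}$, which yields the bound $R^\mu\|f\|_{L^1}$.

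Next I would balance the two contributions: setting $R^{\mu+3}\|f\|_{L^\infty}=R^\mu\|f\|_{L^1}$ gives $R^3=\|f\|_{L^1}/\|f\|_{L^\infty}$, i.e. $R=\bigl(\|f\|_{L^1}/\|f\|_{L^\infty}\bigr)^{1/3}$ (which is admissible when $f$ is not identically zero; the inequality is trivial otherwise). Substituting this choice back into either piece gives
\begin{equation}
 R^\mu \|f\|_{L^1} = \|f\|_{L^1}^{1+\mu/3}\,\|f\|_{L^\infty}^{-\mu/3},
\end{equation}
and taking the supremum over $v\in\R^3$ of the pointwise bound yields the claimed inequality with $C(\mu)=1+\tfrac{4\pi}{\mu+3}$ (up to the universal constant coming from combining the two pieces).

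There is essentially no hard step here: the only point that requires any care is checking the integrability condition $\mu+3>0$ needed for the inner piece, which is exactly the hypothesis $\mu>-3$, and verifying that the constant $C(\mu)$ blows up as $\mu\downarrow -3$ (through the $1/(\mu+3)$ factor), consistent with the endpoint failure of the estimate. The argument is dimension-specific only through the factor $4\pi$ and the exponent $3$ arising from the volume element on $\R^3$; no further analytic machinery (no Young's inequality, no weak-type interpolation, no Hardy-Littlewood-Sobolev) is needed, though one could alternatively deduce the result from the fact that $|\cdot|^\mu\in L^{3/|\mu|,\infty}(\R^3)$ combined with real-interpolation estimates for convolution with weak-$L^p$ kernels.
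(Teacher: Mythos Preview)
Your proof is correct and is exactly the standard splitting argument used to prove this interpolation bound; the paper itself does not prove this lemma but simply cites it from \cite{silvestre2017upperboundLandau}, where the same near/far decomposition and radius optimization is used. One minor remark: to be fully rigorous you should replace $f$ by $|f|$ at the outset (or note the lemma is applied only to nonnegative $f$ in the paper), since the outer-piece bound $|v-w|^\mu\le R^\mu$ needs the integrand to be nonnegative.
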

\noindent
Note that Lemma \ref{lem: improveweight} is an improved version of the estimate in \cite{silvestre2017upperboundLandau}, resulting in a slight improvement in the weight for the conditional regularity result.
There are several conditional regularity results available in the literature. See 
 \cite{alonso2024prodiserrin, golding2024local, gualdani2019apweights, silvestre2017upperboundLandau}.
 Among those results, we cite the conditional regularity result from \cite{silvestre2017upperboundLandau}. 
By applying Lemmas \ref{lem: lowerdiffusion} and \ref{lem: improveweight} to the result of \cite{silvestre2017upperboundLandau}, we derive the following theorem.
\begin{thm}[Improved version of Theorem 3.8 from \cite{silvestre2017upperboundLandau}] \label{thm: conditional}
    Let $f:[0,T]\times \R^3 \to \R_{\ge 0}$ be a solution of the equation \eqref{eq: Landaueq} with initial data $f_0$ that has mass $M_0,$ energy $E_0,$ and entropy $H_0$. If there exist some $p \in (\frac{3}{5+\gamma},\frac{3}{3+\gamma})$ and $k >\frac1p \max\left(3p-3,-\frac{3\gamma}{2}-1 \right)$ such that
    \begin{equation}
        \norm{f(t)}_{L^p_k(\R^3)} \le K_0
    \end{equation}    
     for every $t\in[0,T]$, then
    \begin{equation}
        \norm{f(t)}_{L^\infty(\R^3)} \le C \left(1+ t^{-\frac{3}{2p}} \right)
    \end{equation}
    holds for every $t\in (0,T]$ where $C$ only depends on $\gamma, p,k, M_0,E_0,H_0, K_0$.
\end{thm}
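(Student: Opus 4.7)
The plan is to follow Silvestre's proof of the original Theorem 3.8 in \cite{silvestre2017upperboundLandau} step by step, substituting Lemma \ref{lem: improveweight} wherever that paper invokes a cruder weighted convolution estimate. Recall that the strategy there is to rewrite the collision operator in non-divergence form,
\[
q(f) = A_{ij}[f]\, \partial_{ij} f + \bar c[f]\, f,
\]
where $A[f] = f\ast(|z|^\gamma a(z))$ and $\bar c[f]$ is a lower-order coefficient of convolution type, proportional to $f\ast |z|^\gamma$ (understood distributionally in the Coulomb case), and then to construct a pointwise supersolution of the form
\[
g(t,v) = C\bigl(1+t^{-\frac{3}{2p}}\bigr)\Jap{v}^{-N}
\]
with $N$ large, proving $f\le g$ on $(0,T]\times\R^3$ by a first-contact argument.

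First I would assemble the pointwise coefficient estimates. Lemma \ref{lem: lowerdiffusion} gives the lower ellipticity $A[f](v)\ge c_0 \Jap{v}^\gamma I_3$. For the matching upper bounds, Lemma \ref{lem: improveweight} is applied to the kernels $|z|^{\gamma+2}$ appearing inside the matrix entries of $A[f]$ and $|z|^\gamma$ appearing in $\bar c[f]$. The admissible range $\tfrac{3}{p}-3<\mu<0$ in Lemma \ref{lem: improveweight} forces $p>3/(5+\gamma)$ when $\mu=\gamma+2$ and $p<3/(3+\gamma)$ when $\mu=\gamma$, yielding exactly the hypothesis on $p$ in the theorem. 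The weight threshold $k>3-3/p$ of the same lemma is precisely the first branch of the maximum in the assumption on $k$.

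Next I would run the barrier argument by contradiction: if $f>g$ somewhere, select a first-contact point $(t_0,v_0)$ and exploit $\partial_t f\ge \partial_t g$, $\grad f=\grad g$, $D^2 f\le D^2 g$ there to derive
\[
\partial_t g\;\le\; A_{ij}[f]\,\partial_{ij} g + \bar c[f]\, g
\]
at $(t_0,v_0)$. Explicit differentiation of $g$ produces Hessian contributions of size $\Jap{v}^{-N-2}$, which combined with $A[f]\ge c_0 \Jap{v}^\gamma$ give a negative diffusion term of size $\Jap{v}^{\gamma-N-2}$ to be played against the reaction $\bar c[f]\,g\lesssim \Jap{v}^{\gamma-N} K_0$. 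The second branch $k>-\tfrac{3\gamma}{2p}-\tfrac{1}{p}$ of the maximum is precisely what is needed to dominate the reaction by the diffusion uniformly in $v_0$, once the amplitude $C$ is chosen sufficiently large in terms of $\gamma,p,k,M_0,E_0,H_0,K_0$. The blow-up rate $t^{-3/(2p)}$ of the barrier near $t=0$ is the natural parabolic scaling for an $L^p$ datum.

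I expect the main obstacle to be the careful bookkeeping of weights needed to verify that the diffusion and reaction contributions close simultaneously under the minimal hypothesis on $k$; no conceptually new ingredient beyond Silvestre's original argument should be required, the whole point being that the sharper pointwise convolution bound of Lemma \ref{lem: improveweight} relaxes the permissible weight to exactly the maximum stated.
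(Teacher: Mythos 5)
Your overall intent matches the paper: the paper's ``proof'' of Theorem \ref{thm: conditional} is literally a one-line sketch observing that, since $p>\frac{3}{5+\gamma}$ and $k>3-\frac3p$, Lemma \ref{lem: improveweight} with $\mu=\gamma+2$ upgrades the upper bound to $|A[f](v)|\le C\Jap{v}^{\gamma+2}\norm{f}_{L^p_k}$, and everything else is deferred to Silvestre's original argument. So reproducing that argument with the sharper convolution bound is the right plan. However, your reconstruction of Silvestre's proof misses its central mechanism, and as written your barrier argument cannot close.

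The gap is in how the diffusion term becomes negative at the first contact point. You propose to estimate $A_{ij}[f]\,\partial_{ij}g$ using only the Hessian of the barrier $g=C\beta(t)\Jap{v}^{-N}$, obtaining a term of size $\Jap{v}^{\gamma-N-2}$ to absorb a reaction term of size $\Jap{v}^{\gamma-N}$. This is impossible uniformly in $v$ (the diffusion term is \emph{smaller} at infinity), and, more fundamentally, a barrier whose only input is its own Hessian can never convert an $L^p$ bound into an $L^\infty$ bound with a smoothing rate. The heart of Silvestre's Theorem 3.8 is a quantitative second-derivative estimate at the contact point: if $f(t_0,v_0)=L$ is the first crossing with the barrier and $\norm{f}_{L^p_k}\le K_0$, then $f$ must dip rapidly near $v_0$, which together with the ellipticity lower bound of Lemma \ref{lem: lowerdiffusion} forces $A_{ij}[f]\partial_{ij}f(t_0,v_0)\le -c\,L^{1+2p/3}$ (up to weights). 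It is this superlinear gain that yields the ODE $\beta'\le -c\beta^{1+2p/3}+\dots$ and hence the rate $t^{-3/(2p)}$; calling the rate ``natural parabolic scaling'' does not substitute for it. Your write-up contains no step that produces this gain.

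Two smaller but genuine errors: (i) the constraint $p<\frac{3}{3+\gamma}$ does \emph{not} come from applying Lemma \ref{lem: improveweight} with $\mu=\gamma$; that lemma would require $\gamma>\frac3p-3$, i.e.\ $p>\frac{3}{3+\gamma}$ (the opposite inequality), and for $\gamma\in[-3,-2)$ and $p$ in the stated range $\mu=\gamma$ lies outside its admissible window, so the zeroth-order coefficient must be handled differently (for $\gamma=-3$ it is proportional to $f$ itself; for $\gamma\in(-3,-2)$ one uses an interpolation as in Lemma \ref{lem: interpolation2} and absorbs the resulting power of $\norm{f}_{L^\infty}$ nonlinearly). (ii) Consequently your claim that the second branch $k>\frac1p\left(-\frac{3\gamma}{2}-1\right)$ is ``precisely what is needed to dominate the reaction by the diffusion'' is unsubstantiated; that threshold enters through the weighted version of the second-derivative lemma described above, not through your $\Jap{v}^{\gamma-N-2}$ versus $\Jap{v}^{\gamma-N}$ comparison.
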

\begin{proof}[Sketch of proof]
Since $p> \frac{3}{5+\gamma}$ and $k>3-\frac3p$, Lemma \ref{lem: improveweight} implies that
\begin{equation} \label{eq: upperbound}
    |A[f](v)| \le \norm{f\ast |\cdot|^{\gamma+2}}_{L^\infty(\R^3)} \le C(\gamma,p,k) \Jap{v}^{\gamma+2} \norm{f}_{L^p_k(\R^3)}.
\end{equation}
\end{proof}

\noindent
We can rewrite the equation \eqref{eq: Landaueq} into
\begin{equation}
    \partial_t f =\grad \cdot (A[f]f-b[f]f)
\end{equation}
where
\begin{equation}
    A[f](v)=\int_{\R^3}f(v-w)|w|^\gamma a_{ij}(w)\dd w, \quad b[f](v):=-2\int_{\R^3} f(v-w) |w|^\gamma w \dd w .
\end{equation}
From Lemma \ref{lem: interpolation2}, we obtain
    \begin{equation} \label{eq: upperdiffusion}
    \norm{A[f]}_{L^\infty(\R^3)} \le \norm{f*|\cdot|^{\gamma+2}}_{L^\infty(\R^3)} \le C(\gamma) \norm{f}^{1+\frac{\gamma+2}{3}}_{L^1(\R^3)} \norm{f}_{L^\infty(\R^3)}^{\frac{-\gamma-2}{3}}
\end{equation}
and
\begin{equation} \label{eq: upperfirstorder}
    \norm{b[f]}_{L^\infty(\R^3)} \le \norm{f*|\cdot|^{\gamma+1}}_{L^\infty(\R^3)} \le C(\gamma) \norm{f}^{1+\frac{\gamma+1}{3}}_{L^1(\R^3)} \norm{f}_{L^\infty(\R^3)}^{\frac{-\gamma-1}{3}}
\end{equation}
for some $C(\gamma)>0$.
Note that \eqref{eq: upperdiffusion}, \eqref{eq: upperfirstorder}, and Lemma \ref{lem: lowerdiffusion} show that \eqref{eq: Landaueq} is a uniformly parabolic equation locally, given the $L^\infty$ bound of $f$. This interpretation will be used to obtain higher regularity estimates.

Next, a result regarding the propagation of higher $L^1$ moments can be found in \cite{carrapatoso2017large} or \cite{desvillettes2015entropy}: 
\begin{thm}[Lemma 8 from \cite{carrapatoso2017large}]\label{thm: highermoment}
    Let $f:[0,T]\times \R^3 \to \R_{\ge 0}$ be a solution of the  equation \eqref{eq: Landaueq} with initial data $f_0$ that has mass $M_0$, energy $E_0$, and entropy $H_0$. If $f_0$ belongs to  $L^1_\kappa(\R^3)$ for some $\kappa > 2$, then
    \begin{equation}
        \norm{f(t)}_{L^1_\kappa(\R^3)} \le C(\norm{f_0}_{L^1_\kappa(\R^3)}+t)
    \end{equation}
    for every $t\in [0,T]$ where $C$ only depends on $\gamma, \kappa, M_0, E_0, H_0$.
\end{thm}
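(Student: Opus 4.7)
Write $\phi(v):=\Jap{v}^\kappa$ and $m_\kappa(t):=\int f(t,v)\phi(v)\dd v$. The plan is to derive a Bernoulli-type inequality of the form $\frac{d}{dt}m_\kappa(t)\le C m_\kappa(t)^{(\kappa-2)/\kappa}$ and integrate it.

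I would first test \eqref{eq: Landaueq} against $\phi$, unfold the collision operator \eqref{eq: Landaucollision}, integrate by parts in $v$, and symmetrize in $v\leftrightarrow w$ using $a_{ij}(z)=a_{ij}(-z)$ together with the identity $\sum_j\partial_{z_j}a_{ij}(z)=-2z_i$. This produces the double-integral weak form
\begin{equation*}
\frac{d}{dt} m_\kappa = \tfrac12\iint_{\R^6} f(v)f(w)|v-w|^\gamma \Bigl\{ a_{ij}(v-w)\bigl[\partial^2_{ij}\phi(v)+\partial^2_{ij}\phi(w)\bigr] - 4(v-w)_i\bigl[\partial_i\phi(v)-\partial_i\phi(w)\bigr] \Bigr\}\dd v\dd w.
\end{equation*}
Elementary pointwise bounds $|\grad^2\phi(v)|\le C_\kappa\Jap{v}^{\kappa-2}$, $|\grad\phi(v)-\grad\phi(w)|\le C_\kappa|v-w|\max(\Jap{v},\Jap{w})^{\kappa-2}$, and $|a_{ij}(z)|\lesssim|z|^2$ then give
\begin{equation*}
\frac{d}{dt} m_\kappa \le C_\kappa \iint f(v)f(w) |v-w|^{\gamma+2}\max(\Jap{v},\Jap{w})^{\kappa-2}\dd v\dd w.
\end{equation*}

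Next I would split into $\{|v-w|\ge 1\}$ and $\{|v-w|<1\}$. On the far region, $|v-w|^{\gamma+2}\le 1$ and the contribution is, by symmetry, at most $2C_\kappa M_0\int f\Jap{v}^{\kappa-2}\dd v$. On the near region, $\Jap{v}\sim\Jap{w}$, so matters reduce to bounding
\begin{equation*}
\int f(v)\Jap{v}^{\kappa-2}(f\ast K_1)(v)\dd v, \qquad K_1(z):=|z|^{\gamma+2}\one_{|z|<1}.
\end{equation*}
The key pointwise estimate I aim for is $(f\ast K_1)(v)\le C(M_0,E_0,H_0)$, obtained from the $L\log L$ bound in Remark \ref{remark: LlogL} via a de la Vall\'ee-Poussin / layer-cake argument, combined with the concentration of mass in Lemma \ref{lem: concentration}. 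With this in hand, the two contributions combine to $\frac{d}{dt} m_\kappa \le C\int f\Jap{v}^{\kappa-2}\dd v$, and Hölder interpolation between $M_0$ and $m_\kappa$ yields
\begin{equation*}
\int f\Jap{v}^{\kappa-2}\dd v \le M_0^{2/\kappa} m_\kappa(t)^{(\kappa-2)/\kappa},
\end{equation*}
so the promised Bernoulli ODE holds. Integrating, $m_\kappa(t)^{2/\kappa}\le m_\kappa(0)^{2/\kappa}+Ct$, from which the stated bound $m_\kappa(t)\le C(m_\kappa(0)+t)$ follows on the fixed interval $[0,T]$.

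\textbf{Main obstacle.} The tight step is the pointwise convolution bound on the near region. At the Coulomb endpoint $\gamma=-3$, $K_1(z)=|z|^{-1}\one_{|z|<1}$ sits exactly at the boundary of what the Orlicz dual of $L\log L$ can absorb: the crude equi-integrability bound $\int_{|v-w|<r}f\dd w\lesssim 1/\log(1/r)$ fed into the layer-cake representation for $(f\ast K_1)(v)$ produces the logarithmically divergent integral $\int_0^1 r^{\gamma+1}/\log(1/r)\dd r$. Closing the estimate therefore requires either exploiting the sharper form of the de la Vall\'ee-Poussin inequality in concert with the concentration lemma, or taking a brief detour through the entropy-dissipation / weighted-Fisher estimate to upgrade $f(t)$ to $L^p$ with $p>1$ at positive times before applying Young's convolution inequality.
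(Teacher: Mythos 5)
The paper does not prove this statement; it is quoted verbatim from the literature (Lemma 8 of \cite{carrapatoso2017large}), so your proposal can only be judged on its own merits and against what that reference actually does. Your weak formulation, the sign/size estimates on $\Gamma_\phi:=a_{ij}(v-w)[\partial_{ij}\phi(v)+\partial_{ij}\phi(w)]-4(v-w)_i[\partial_i\phi(v)-\partial_i\phi(w)]$, the far/near splitting, the H\"older interpolation $\int f\Jap{v}^{\kappa-2}\le M_0^{2/\kappa}m_\kappa^{(\kappa-2)/\kappa}$, and the Bernoulli integration are all correct and are the standard skeleton. The problem is the step you yourself flag: the pointwise bound $(f\ast K_1)(v)\le C(M_0,E_0,H_0)$ with $K_1(z)=|z|^{\gamma+2}\one_{|z|<1}$ is simply \emph{false} under the stated hypotheses, for every $\gamma\in[-3,-2)$, not just at the Coulomb endpoint. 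A function behaving like $|w|^{-3}(\log(1/|w|))^{-3}$ near a point has finite mass, energy and entropy, yet $f\ast|z|^{\mu}\one_{|z|<1}$ is infinite there for any $\mu<0$; no refinement of de la Vall\'ee-Poussin can rescue this, and Lemma \ref{lem: concentration} is of no use because it provides a lower bound on mass concentration, not an upper bound. Moreover your layer-cake integral $\int_0^1 r^{\gamma+1}/\log(1/r)\,\dd r$ is power-divergent, not logarithmically divergent. So the first of your two proposed repairs is a dead end; the second one (routing through the entropy dissipation) is the actual proof: the weighted Fisher-information bound of Desvillettes \cite{desvillettes2015entropy} gives $f\in L^1_t L^3_{v,\mathrm{loc}}$ quantitatively, which controls $\int_0^t\sup_v(f\ast K_1)(s,v)\,\dd s\lesssim 1+t$ and closes the differential inequality. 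As written, your argument is therefore incomplete at its central estimate.

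Two smaller points. First, even granting the Bernoulli inequality, $m_\kappa(t)^{2/\kappa}\le m_\kappa(0)^{2/\kappa}+Ct$ yields $m_\kappa(t)\lesssim m_\kappa(0)+t^{\kappa/2}$, which gives the stated linear bound only with a constant depending on $T$ (harmless for the application in Corollary \ref{cor: fisherdissipation}, but weaker than the $T$-independent constant claimed in the statement). Second, you discard useful structure: the transport term $-4(v-w)\cdot(\grad\phi(v)-\grad\phi(w))$ is nonpositive by monotonicity of $v\mapsto v\Jap{v}^{\kappa-2}$, and in the Hessian term $a_{ij}(v-w)v_iv_j=|v\times w|^2$; keeping these makes the bookkeeping cleaner, though it does not remove the $|v-w|^{\gamma+2}$ singularity and hence does not avoid the need for the entropy-dissipation input.
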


Lastly, we review the entropy dissipation estimates. By symmetrizing, it is straightforward to verify that
\begin{equation} \label{eq: entropydissipation}
    -\partial_t h(f) = \frac12 \iint_{\R^6} f(v)f(w)|v-w|^{\gamma} a(v-w) \left(\frac{\grad f(v)}{f(v)}-\frac{\grad f(w)}{f(w)}\right) \left(\frac{\grad f(v)}{f(v)}-\frac{\grad f(w)}{f(w)}\right)  \dd w \dd v \ge 0.
\end{equation}
The entropy dissipation structure \eqref{eq: entropydissipation} was crucial in Villani's construction of the $H$-solution.
More than being just non-negative, coercitivity estimates have been studied for the entropy dissipation. See \cite{alexandre2000entropydissipation,desvillettes2015entropy, ji2024entropy, mouhot2006coercivity}.
In the case of the Coulomb potential, Desvillettes proved that the entropy dissipation belongs to $L^3_{-3}(\R^3)$ in \cite{desvillettes2015entropy}. He used this estimate to prove that the $H$-solution is, in fact, the classical weak solution. Later, the author improved the weight to $L^3_{-5/3}(\R^3)$ in \cite{ji2024entropy} and showed that this weight is optimal. These results can be easily extended to very soft potentials, leading to the conclusion that the entropy dissipation belongs to $L^3_{(3\gamma+4)/3}(\R^3)$.

\section{A priori estimate for the dissipation of the Fisher information}
In this section, we show an a priori estimate for the dissipation of the Fisher information.
We begin by proving Theorem \ref{thm: Fisherdecay}, assuming the conclusion of Theorem \ref{thm: fisherdissipation}. First, 
we derive a simple ODE-type inequality through a straightforward argument. 

\begin{cor}\label{cor: fisherdissipation}
    Suppose $\alpha(r)=r^\gamma$ and $\gamma \in[-3,-2)$. Let $f:[0,T]\times \R^3$ be a solution of the Landau equation \eqref{eq: Landaueq} with initial data $f_0$ that has mass $M_0$, energy $E_0$, and entropy $H_0$. Furthermore, assume 
    \begin{equation}
        \int_{\R^3} f_0(v) |v|^{2-\gamma } \dd v\le W_0.
    \end{equation}
    Then,  
    \begin{equation}
    -\partial_t i(f) \ge   c_2 i(f)^2  - C_1 M_0 \cdot i(f) - C_2 M_0^2.
\end{equation}
Here $C_1,C_2$ are absolute constants and $c_2$ depends on $T, \gamma, M_0,W_0,H_0$.
\end{cor}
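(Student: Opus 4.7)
The strategy is a weighted Cauchy--Schwarz argument that converts the second-order coercive integral on the right-hand side of Theorem \ref{thm: fisherdissipation} into a term quadratic in $i(f)$, together with a uniform bound on $\norm{f(t)}_{L^1_{2-\gamma}}$ coming from propagation of higher moments.

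First I rewrite the Fisher information via integration by parts. Writing $\grad f = f\,\grad \log f$ and moving one derivative (which is justified at the a priori level of smooth, positive, rapidly decaying solutions we are working with), one obtains
\begin{equation}
i(f)=\int_{\R^3} \grad f\cdot \grad \log f \dd v=-\int_{\R^3} f\,\Delta \log f \dd v.
\end{equation}
Splitting the integrand with the weights $\Jap{v}^{(\gamma-2)/2}$ and $\Jap{v}^{(2-\gamma)/2}$ and applying Cauchy--Schwarz gives
\begin{equation}
i(f)^2 \le \left(\int_{\R^3} f\,\Jap{v}^{2-\gamma} \dd v\right)\left(\int_{\R^3} f\,\Jap{v}^{\gamma-2}(\Delta \log f)^2 \dd v\right).
\end{equation}
Since $\Delta \log f = \tr(\grad^2\log f)$ in $\R^3$, a standard trace--Frobenius inequality gives $(\Delta \log f)^2 \le 3\norm{\grad^2\log f}^2$, so
\begin{equation}
\int_{\R^3} f\,\Jap{v}^{\gamma-2}\norm{\grad^2\log f}^2 \dd v \ge \frac{i(f)^2}{3\,\norm{f}_{L^1_{2-\gamma}}}.
\end{equation}

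To close the argument I need a uniform bound on $\norm{f(t)}_{L^1_{2-\gamma}}$ over $[0,T]$. Since $2-\gamma\in(4,5]$ and $\Jap{v}^{2-\gamma}\le C(1+|v|^{2-\gamma})$, the hypothesis gives $\norm{f_0}_{L^1_{2-\gamma}} \le C(M_0+W_0)$. Applying Theorem \ref{thm: highermoment} with $\kappa = 2-\gamma$ then yields
\begin{equation}
\norm{f(t)}_{L^1_{2-\gamma}} \le C(\gamma,M_0,E_0,H_0)(M_0+W_0+T), \qquad t\in[0,T].
\end{equation}
Substituting this uniform bound into Theorem \ref{thm: fisherdissipation} produces the desired inequality with $c_2 = c_1/\bigl(3\sup_{t\in[0,T]}\norm{f(t)}_{L^1_{2-\gamma}}\bigr)$. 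The apparent $E_0$ dependence collapses because $|v|^2 \le 1+|v|^{2-\gamma}$ implies $E_0 \le M_0+W_0$, so $c_2$ depends only on $T,\gamma,M_0,W_0,H_0$, as stated.

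I do not expect a serious obstacle: the argument is essentially one application of weighted Cauchy--Schwarz combined with the propagation of higher $L^1$ moments. The only mildly delicate step is the integration by parts defining $i(f)=-\int f\,\Delta\log f \dd v$, but this is standard in the smooth positive framework of the a priori estimate, and can be obtained in full generality by approximation (e.g.\ adding $\eps$ and sending $\eps\to 0$).
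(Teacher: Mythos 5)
Your proposal is correct and follows essentially the same route as the paper: the identity $i(f)=-\int f\,\Delta\log f\dd v$, the pointwise bound $(\Delta\log f)^2\le 3\norm{\grad^2\log f}^2$, the weighted Cauchy--Schwarz with weights $\Jap{v}^{\pm(2-\gamma)/2}$, and Theorem \ref{thm: highermoment} to control $\norm{f(t)}_{L^1_{2-\gamma}}$ uniformly on $[0,T]$. Your closing observation that $E_0\le M_0+W_0$ matches the paper's remark explaining why no $E_0$ dependence appears in $c_2$.
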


\begin{remark}
Note that a dependence on $E_0$ is unnecessary because it can be controlled by $M_0$ and $W_0$.
\end{remark}

\begin{proof}
By the Cauhcy-Schwarz inequality,
\begin{equation}
    \norm{\grad^2 \log f(v)}^2 \ge \sum_{i=1}^3 (\partial_{v_i v_i} \log f(v))^2 \ge \frac13 (\lap \log f(v))^2.
\end{equation}
It follows that
\begin{equation}
    \int_{\R^3} \Jap{v}^{\gamma-2} f(v) \norm{\grad^2 \log f(v)}^2 \dd v \ge \frac13 \int_{\R^3} \Jap{v}^{\gamma-2} f(v)(\lap \log f(v))^2  \dd v.
\end{equation}
Applying the Cauchy-Schwarz inequality again,
\begin{equation}
   \left( \int_{\R^3} \Jap{v}^{\gamma-2} f(v) (\lap \log f(v))^2 \dd v \right) \left(\int_{\R^3}f(v)  \Jap{v}^{2-\gamma} \dd v \right) \ge \left( \int_{\R^3} f(v) \lap \log f(v) \dd v \right)^2 = i(f)^2.
\end{equation}
Recall that Theorem \ref{thm: highermoment} says every higher $L^1$ moment of $f$ has at most linear growth in time, thus,
\begin{equation}
    \int_{\R^3} f(v)  \Jap{v}^{2-\gamma} \dd v \le C
\end{equation}
for some $C=C(T,\gamma, M_0, H_0, W_0)$, which concludes the proof.
\end{proof}
\begin{proof} [Proof of Theorem \ref{thm: Fisherdecay}]
    From Corollary \ref{cor: fisherdissipation}, the inequality
    \begin{equation}
        -\partial_t i(f) \ge c_2 i(f)^2- C_1 M_0 \cdot  i(f) -C_2 M_0^2 
    \end{equation}
holds. It is clear that there exists $\tilde{C}>0$ depending on $c_2,C_1,C_2$ such that
\begin{equation}
    -\partial_t i(f) > \frac{c_2}{2} i(f)^2 -\tilde{C} M_0^2
\end{equation}
by the AM-GM inequality.\\
Let's prove by contradiction. Suppose that
\begin{equation}
    i(f(t)) \ge \sqrt{\frac{4\tilde{C}}{c_2}}M_0 +\frac{4}{c_2 t }
\end{equation}
holds.  It follows that
\begin{equation}
    -\partial_t i(f(t)) \ge \frac{c_2}{2} i(f(t))^2 -\tilde{C} M_0^2 >  \frac{c_2}{4} i(f(t))^2
\end{equation}
Solving the ODE, we obtain
\begin{equation}
    \frac{1}{i(f(t))} > \frac{c_2 t}{4},
\end{equation}
which is a contradiction. This concludes the proof of Theorem \ref{thm: Fisherdecay}.    
\end{proof}
It remains to prove Theorem \ref{thm: fisherdissipation}.
First, we explain in detail the idea of the proof that the Fisher information is decreasing.
Recall that
\begin{align}\label{eq: Fisher}
    -\frac{1}{2} \partial_t I(F)
    &=\iint F |\grad (\sqrt{\alpha} \tilde{b}_k\cdot \grad\log F)|^2 \dd v \dd w - \iint F \frac{\alpha'^2}{2\alpha} |\tilde{b}_k\cdot \grad\log F|^2 \dd v \dd w.
\end{align}
from Proposition \ref{prop: Fisher}. We further define normal vectors $n_0 \in \R^3$ and $n \in \R^6$ as follows:
 \begin{equation}
        n_0=\frac{v-w}{|v-w|}, \quad   n=\frac{1}{\sqrt{2}}
        \begin{pmatrix}
            n_0\\
            -n_0
        \end{pmatrix}
        .
    \end{equation}
Then, the matrix $a_{ij}$ can be also written as
\begin{equation}
    a(v-w)=|v-w|^2 I_3 -|v-w|^2 n_0 n_0^t.
\end{equation}

\begin{lemma}[Lemma 8.3 from \cite{guillen2023landau}]\label{lem: decomposegradient}
Given any differentiable function $G:\R^6 \to \R$, the following identity holds at every point $(v,w)\in \R^6$.
\begin{equation} 
    |\grad G|^2=\frac12 \sum_{i=1}^3 |(\grad_{v_i}+\grad_{w_i})G|^2+|n\cdot \grad G|^2 +\frac{1}{2|v-w|^2}\sum_{k=1}^3|\tilde{b}_k \cdot \grad G|^2.
\end{equation}
\end{lemma}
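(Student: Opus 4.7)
The identity is a pointwise orthogonal decomposition of $\mathbb R^6$ adapted to the symmetry $(v,w)\leftrightarrow(w,v)$. My plan is to split $\mathbb R^6$ into the symmetric subspace $S=\{(x,x):x\in\mathbb R^3\}$ and the antisymmetric subspace $A=\{(x,-x):x\in\mathbb R^3\}$, and then further split $A$ into the line spanned by $n$ and its orthogonal complement inside $A$, which I will identify as the span of the lifted fields $\tilde b_k$.

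\textbf{Step 1: the symmetric/antisymmetric split.} Writing $\grad G=(\grad_v G,\grad_w G)$, I decompose
\[
  (\grad_v G,\grad_w G)=\tfrac12\bigl(\grad_v G+\grad_w G,\grad_v G+\grad_w G\bigr)+\tfrac12\bigl(\grad_v G-\grad_w G,-(\grad_v G-\grad_w G)\bigr).
\]
These two pieces are orthogonal in $\mathbb R^6$, so
\[
  |\grad G|^2=\tfrac12\sum_{i=1}^3\bigl|(\partial_{v_i}+\partial_{w_i})G\bigr|^2+\tfrac12\bigl|\grad_v G-\grad_w G\bigr|_{\mathbb R^3}^2.
\]
This already accounts for the first term on the right-hand side and reduces the problem to showing that the antisymmetric piece equals $|n\cdot\grad G|^2+\tfrac1{2|v-w|^2}\sum_k|\tilde b_k\cdot\grad G|^2$.

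\textbf{Step 2: identifying the antisymmetric subspace via $n_0$.} Set $Z:=\tfrac{1}{\sqrt 2}(\grad_v G-\grad_w G)\in\mathbb R^3$, so that the antisymmetric contribution equals $|Z|^2$. A direct computation gives $n\cdot\grad G=n_0\cdot Z$ and, since $\tilde b_k=(b_k,-b_k)$, also $\tilde b_k\cdot\grad G=\sqrt 2\,b_k\cdot Z$. Hence the claim reduces to the purely $\mathbb R^3$ identity
\[
  |Z|^2=(n_0\cdot Z)^2+\frac{1}{|v-w|^2}\sum_{k=1}^3(b_k\cdot Z)^2.
\]

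\textbf{Step 3: the Levi-Civita identity.} With $b_k(v-w)=e_k\times(v-w)$, the standard contraction $\sum_{k}\varepsilon_{kli}\varepsilon_{kmj}=\delta_{lm}\delta_{ij}-\delta_{lj}\delta_{im}$ yields
\[
  \sum_{k=1}^3 b_k(v-w)\,b_k(v-w)^t=|v-w|^2\,I_3-(v-w)(v-w)^t=a(v-w).
\]
Contracting with $Z\otimes Z$ gives $\sum_k(b_k\cdot Z)^2=|v-w|^2\bigl(|Z|^2-(n_0\cdot Z)^2\bigr)$, which is exactly what is needed for Step 2. Assembling Steps 1--3 proves the lemma.

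\textbf{Anticipated obstacle.} There is no deep obstacle; the only mildly delicate point is keeping the factors of $\sqrt 2$ consistent when passing between $(\grad_v G,\grad_w G)\in\mathbb R^6$ and its antisymmetric projection $Z\in\mathbb R^3$, and in matching the normalizations of $n$ and $\tilde b_k$. The geometric content is that the three vectors $\tilde b_k$, although linearly dependent (they span the $2$-plane in $A$ perpendicular to $n$), contribute with the sharp weight $1/(2|v-w|^2)$ because of the Levi-Civita identity $\sum_k b_kb_k^t=|v-w|^2I-(v-w)(v-w)^t$.
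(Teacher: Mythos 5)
Your proof is correct and is essentially the same argument the paper has in mind: the lemma is quoted from Guillen--Silvestre without proof, but the displayed decomposition of $I_6$ into the three matrices $\tfrac12\begin{pmatrix}I_3&I_3\\ I_3&I_3\end{pmatrix}$, $\tfrac12\begin{pmatrix}n_0n_0^t&-n_0n_0^t\\-n_0n_0^t&n_0n_0^t\end{pmatrix}$, and $\tfrac{1}{2|v-w|^2}\begin{pmatrix}a&-a\\-a&a\end{pmatrix}$ immediately following the lemma is exactly what your symmetric/antisymmetric split plus the identity $\sum_k b_kb_k^t=a(v-w)$ verifies. The bookkeeping of the $\sqrt2$ normalizations in $n$ and $\tilde b_k$ is handled correctly.
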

 We associate quadratic forms in $6$ variables to $6\times 6$ matrices. For example, we associate $|\grad G|^2$ to the identity matrix $I_6$ since
\begin{equation}
    |\grad G|^2 = (\grad G)^t  \begin{pmatrix}
\begin{matrix} I_3 \\  \end{matrix} &0 \\ 0 & I_3 \\ 
\end{pmatrix} \grad G.
\end{equation}
Hence, we can interpret Lemma \ref{lem: decomposegradient} as the decomposition of $I_6$ into matrices:
\begin{equation}
    \begin{pmatrix}
\begin{matrix} I_3 \\  \end{matrix} &0 \\ 0 & I_3 \\ 
\end{pmatrix}=\frac{1}{2}\begin{pmatrix}
\begin{matrix} I_3 \\  \end{matrix} & I_3 \\ I_3 &  I_3 \\  
\end{pmatrix}
+\frac{1}{2}\begin{pmatrix}
\begin{matrix} n_0n_0^t \\  \end{matrix} & -n_0 n_0^t \\ -n_0 n_0^t&  n_0 n_0^t   \\  
\end{pmatrix}
+
\frac{1}{2|v-w|^2}\begin{pmatrix}
\begin{matrix} a_{ij}(v-w) \\  \end{matrix} & -a_{ij}(v-w) \\ -a_{ij}(v-w) &  a_{ij}(v-w) \\ 
\end{pmatrix}.
\end{equation}
This decomposition leads to
    \begin{align}  \label{eq: decomposition}  
    -\frac12 \Jap{I'(F),Q(F)}
    &=D_{par}+D_{rad}+D_{sph}-\sum_{k=1}^3 \iint_{\R^6} \frac{(\alpha')^2}{2\alpha} F |\tilde{b}_k\cdot \grad \log F|^2 \dd w \dd v
    \end{align}
where
\begin{equation}
    D_{par}:=\frac12 \sum_{i,j=1,2,3}\iint_{\R^6} \alpha(|v-w|) F|(\partial_{v_i}+\partial_{w_i})\tilde{b}_j\cdot \grad \log F|^2 \dd w \dd v,
\end{equation}
\begin{equation}
    D_{rad}:=\sum_{i=1}^3 \iint_{\R^6} F|n\cdot \grad \left(\sqrt{\alpha} \tilde{b}_i\cdot \grad \log F \right)|^2 \dd w \dd v,
\end{equation}
\begin{equation}
    D_{sph}:=\sum_{i,j=1,2,3} \iint_{\R^6} \frac{\alpha}{2|v-w|^2} F |\tilde{b}_i \cdot \grad (\tilde{b}_j \cdot \grad \log F)|^2 \dd w \dd v.
\end{equation}
Note that $\alpha$ can be factored out from the parentheses in the expressions for $D_{par}$ and $D_{sph}$ because $(\partial_{v_i}+\partial_{w_i})\sqrt{\alpha}=0$ and $\tilde{b}_i \cdot \grad{\sqrt{\alpha}}=0$.
We also define
\begin{equation}
    R_{sph}:=\sum_{k=1}^3 \iint_{\R^6} \frac{\alpha}{|v-w|^2} F|\tilde{b}_k \cdot \grad \log F|^2 \dd w \dd v.
\end{equation}

Note that
\begin{equation}
    \sum_{i,j=1,2,3} \int_{S^2}f(b_i \cdot \grad (b_j \cdot \grad \log f))^2 \dd \sigma = \int_{S^2} f \left( \norm{\grad_\sigma^2 \log f}^2 +|\grad_\sigma \log f|^2 \right) \dd \sigma=\int_{S^2} f \Gamma_2 (\log f, \log f) \dd \sigma,
\end{equation}
\begin{equation}
    \sum_{k=1}^3 \int_{S^2} f(b_k\cdot \grad \log f)^2 \dd \sigma =\int_{S^2} f |\grad_\sigma \log f|^2 \dd \sigma.
\end{equation}
It is not difficult to verify that
\begin{equation}
    D_{sph} \ge 2 \Lambda_3 R_{sph},
\end{equation}
after reducing the formulas to the Bakry-\'Emery $\Gamma_2$ criterion inequality \eqref{eq: Bakryemery} by applying the change of variables
\begin{equation}
    v=z+\rho \sigma, \quad w=z-\rho \sigma
\end{equation}
where $z=(v+w)/2 \in \R^3$ and $\rho \sigma=(v-w)/2$ for $\rho \in \R_{\ge 0}$ and $\sigma \in S^2$.
Therefore, from the formula \eqref{eq: decomposition}, it follows that
\begin{equation}
     -\frac12 \Jap{I'(F),Q(F)} \ge D_{sph}-\sup_{r>0} \left( \frac{r^2\alpha'(r)^2}{2\alpha(r)^2}\right) R_{sph} \ge 0
\end{equation}
if the interaction potential $\alpha$ satisfies
     \begin{equation} 
        \sup_{r>0}  \frac{r|\alpha'(r)|}{\alpha(r)} \le \sqrt{4\Lambda_3}.
    \end{equation}

In the proof for the monotonicity of the Landau equation in \cite{guillen2023landau}, the terms $D_{par}$ and $D_{rad}$ are neglected in the analysis, aside from their nonnegativity. However, for our proof of the dissipation estimate of the Fisher information, it is essential to exploit the term $D_{par}$. The term $D_{rad}$ remains unused, apart from its nonnegativity. 

\begin{lemma} \label{lem: newdissipation}
    \begin{equation}
        D_{par}+D_{sph} \ge \sum_{i,j=1,2,3} \iint_{\R^6} \frac{2\alpha}{|v-w|^2} F|b_i \cdot \grad_v(\tilde{b}_j \cdot \grad \log F)|^2 \dd w \dd v.
    \end{equation}
\end{lemma}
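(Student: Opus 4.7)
The plan is to symmetrize the target integrand using the invariance of $F$ under the swap $v\leftrightarrow w$, and then apply the parallelogram identity to split it into a symmetric piece that will match $2D_{par}$ and an antisymmetric piece that will match $2D_{sph}$. Writing $G_j:=\tilde{b}_j\cdot\grad\log F$ for brevity, the first task is to verify that $G_j$ inherits the exchange symmetry $G_j(v,w)=G_j(w,v)$ from $F$: under the swap, $b_j(v-w)$ flips sign, but this is compensated by the interchange of $\grad_v$ and $\grad_w$ together with $F(v,w)=F(w,v)$.

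Using this symmetry I would first average the integrand with its image under the swap, obtaining
\begin{equation}
\iint_{\R^6}\frac{2\alpha F}{|v-w|^2}\,|b_i\cdot\grad_v G_j|^2\,\dd w\,\dd v=\iint_{\R^6}\frac{\alpha F}{|v-w|^2}\bigl(|b_i\cdot\grad_v G_j|^2+|b_i\cdot\grad_w G_j|^2\bigr)\dd w\,\dd v.
\end{equation}
Next, I would apply the parallelogram identity $|a|^2+|b|^2=\tfrac12|a+b|^2+\tfrac12|a-b|^2$ pointwise with $a=b_i\cdot\grad_v G_j$ and $b=b_i\cdot\grad_w G_j$, noting that $b_i\cdot(\grad_v-\grad_w)G_j=\tilde{b}_i\cdot\grad G_j$ by the very definition of the lifted field $\tilde{b}_i=(b_i,-b_i)^t$. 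This rewrites the target as
\begin{equation}
\frac12\iint_{\R^6}\frac{\alpha F}{|v-w|^2}\bigl(|b_i\cdot(\grad_v+\grad_w)G_j|^2+|\tilde{b}_i\cdot\grad G_j|^2\bigr)\dd w\,\dd v.
\end{equation}
Summing over $i$ and using the scalar triple product identity $\sum_{i=1}^3 |b_i(v-w)\cdot X|^2=|(v-w)\times X|^2\le|v-w|^2|X|^2$ applied with $X=(\grad_v+\grad_w)G_j$, the symmetric piece is pointwise dominated by $\alpha F\sum_k|(\partial_{v_k}+\partial_{w_k})G_j|^2$; after summing over $j$ and integrating this contributes $2D_{par}$, while the antisymmetric piece contributes $2D_{sph}$ by the definition of $D_{sph}$. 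Combined with the overall factor $\tfrac12$ this yields the claim.

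The main subtlety is that the symmetrization must be performed \emph{before} the decomposition. A naive pointwise approach, writing $b_i\cdot\grad_v G_j=\tfrac12 b_i\cdot(\grad_v+\grad_w)G_j+\tfrac12\tilde{b}_i\cdot\grad G_j$ and invoking $|a+b|^2\le 2(|a|^2+|b|^2)$, only gives the weaker bound $2(D_{par}+D_{sph})$ and loses a factor of two. It is the parallelogram identity, which is an equality, together with the symmetry $F(v,w)=F(w,v)$ that produces the sharp constant.
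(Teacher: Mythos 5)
Your proof is correct and is essentially the paper's argument in scalar form: your parallelogram identity $|a|^2+|b|^2=\tfrac12|a+b|^2+\tfrac12|a-b|^2$ is the matrix decomposition $\mathrm{diag}(a,a)=\tfrac12\left(\begin{smallmatrix}a&a\\a&a\end{smallmatrix}\right)+\tfrac12\left(\begin{smallmatrix}a&-a\\-a&a\end{smallmatrix}\right)$ the paper uses, your bound $\sum_i|b_i\cdot X|^2=|(v-w)\times X|^2\le|v-w|^2|X|^2$ is the paper's $\tfrac{1}{|v-w|^2}a\le I_3$, and both proofs invoke the $v\leftrightarrow w$ symmetry of $F$ to recover the factor $2$. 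No gaps.
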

\begin{proof}
    Recall that $D_{par}$ is associated to the matrix
    \begin{equation}
        \frac12
        \begin{pmatrix}
            \begin{matrix} I_3 \\  \end{matrix} & I_3 \\ I_3 &  I_3 \\  
        \end{pmatrix},
    \end{equation}
    and $D_{sph}$ is associated to the matrix
    \begin{equation}
        \frac{1}{2|v-w|^2}
        \begin{pmatrix}
            \begin{matrix} a_{ij}(v-w) \\  \end{matrix} & -a_{ij}(v-w) \\ -a_{ij}(v-w) &  a_{ij}(v-w) \\ 
        \end{pmatrix}.
    \end{equation}
Observe that
   \begin{equation}
   \frac12
   \begin{pmatrix}
        \begin{matrix} I_3 \\  \end{matrix} & I_3 \\ I_3 &  I_3 \\  
    \end{pmatrix} 
    \ge \frac{1}{2|v-w|^2}
    \begin{pmatrix}
        \begin{matrix} a_{ij}(v-w) \\  \end{matrix} & a_{ij}(v-w) \\ a_{ij}(v-w) &  a_{ij}(v-w) \\ 
    \end{pmatrix},
\end{equation}
hence
\begin{equation}
    \frac12
   \begin{pmatrix}
        \begin{matrix} I_3 \\  \end{matrix} & I_3 \\ I_3 &  I_3 \\  
    \end{pmatrix} 
    +
    \frac{1}{2|v-w|^2}
        \begin{pmatrix}
            \begin{matrix} a_{ij}(v-w) \\  \end{matrix} & -a_{ij}(v-w) \\ -a_{ij}(v-w) &  a_{ij}(v-w) \\ 
        \end{pmatrix}
    \ge \frac{1}{|v-w|^2}
    \begin{pmatrix}
        \begin{matrix} a_{ij}(v-w) \\  \end{matrix} & 0 \\ 0 &  a_{ij}(v-w) \\ 
    \end{pmatrix}.
\end{equation}
It follows that
 \begin{equation} 
        D_{par}+D_{sph} \ge \sum_{i,j=1,2,3} \iint_{\R^6} \frac{\alpha}{|v-w|^2} F|b_i \cdot \grad_v(\tilde{b}_j \cdot \grad \log F)|^2 \dd w \dd v + \iint_{\R^6} \frac{\alpha}{|v-w|^2} F|b_i \cdot \grad_w(\tilde{b}_j \cdot \grad \log F)|^2 \dd w \dd v.
    \end{equation}
    By the symmetry, the two terms of the right hand side are equal.
\end{proof}
\begin{prop} \label{prop: dissipationnew}
    Under the same assumptions in Theorem \ref{thm: fisherdissipation}, the inequality
    \begin{equation}
        D_{par}+D_{sph} \ge c \int_{\R^3} \Jap{v}^{\gamma-2} f(v) \norm{\grad^2 \log f(v)}^2 \dd v -C_3 M_0 \cdot i(f) -C_4 M_0^2
    \end{equation}
    holds for some absolute constants $C_3, C_4$ and some $c>0$ only depending on $M_0, E_0, H_0$.
\end{prop}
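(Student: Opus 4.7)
The plan is to start from Lemma \ref{lem: newdissipation}, which gives
$$D_{par}+D_{sph} \ge 2 \sum_{i,j=1}^{3} \iint_{\R^6} |z|^{\gamma-2} F \, \bigl| b_i \cdot \grad_v(\tilde b_j \cdot \grad \log F) \bigr|^2 \dd w\,\dd v, \qquad z := v-w.$$
At $t=0$ the lifted function is $F(v,w)=f(v)f(w)$, so $\log F = \log f(v)+\log f(w)$ and $\tilde b_j\cdot \grad \log F = b_j(z)\cdot X$ with $X := \grad\log f(v)-\grad\log f(w)$. A direct computation using $b_j(z)=e_j\times z$ and the identity $(b_i)_m\epsilon_{jml}X_l = z_jX_i - \delta_{ij}(z\cdot X)$ yields
$$b_i\cdot \grad_v(\tilde b_j\cdot \grad\log F) = b_i(z)\cdot H(v)\cdot b_j(z) + E_{ij}(v,w),$$
where $H(v) := \grad^2\log f(v)$ and $E_{ij} := z_jX_i - \delta_{ij}(z\cdot X)$. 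Applying the inequality $|A+B|^2 \ge \tfrac12 |A|^2 - |B|^2$ splits the integrand into a main Hessian contribution and a first-order error contribution.

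For the Hessian contribution, $\sum_{i,j}|b_iHb_j|^2 = \tr(aHaH) = |z|^4\|P_{\hat z^\perp}HP_{\hat z^\perp}\|^2$ since $a(z)=|z|^2(I-\hat z\hat z^t)$. Expanding $\|P_{\hat z^\perp}HP_{\hat z^\perp}\|^2 = \|H\|^2 - 2\hat z^t H^2\hat z + (\hat z^t H\hat z)^2$ and using the tensor identity $z_iz_j|z|^\gamma = |z|^{\gamma+2}\delta_{ij} - |z|^\gamma a_{ij}(z)$ to convert the integrand into convolutions against $|z|^{\gamma+2}$ and $|z|^\gamma a_{ij}(z)$, the integrated main term takes the form
$$\int f(v)\bigl\{2\tr(H(v)^2 A[f](v)) - \tilde A(v)\|H(v)\|^2 + I_2(v)\bigr\}\dd v,$$
with $\tilde A(v) := (f\ast|z|^{\gamma+2})(v)$ and $I_2(v)\ge 0$. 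The heart of the argument is then to prove the pointwise inequality $2\tr(H^2 A[f]) - \tilde A\|H\|^2 + I_2 \ge c\,\Jap{v}^{\gamma-2}\|H\|^2$ for some $c=c(M_0,E_0,H_0)>0$, using the sharp directional form of Lemma \ref{lem: lowerdiffusion}---whose tangential strength $\Jap{v}^{\gamma+2}(I-\hat v\hat v^t)$ is needed to dominate the subtracted $\tilde A\|H\|^2$---together with a compatible upper bound on $\tilde A(v)$ derived from Lemmas \ref{lem: improveweight}, \ref{lem: interpolation2} and the concentration Lemma \ref{lem: concentration}.

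For the error contribution, $\sum_{i,j}|E_{ij}|^2 = |z|^2|P_{\hat z^\perp}X|^2 + 2(z\cdot X)^2$, so the contribution splits as $2R_{sph}+4E_\parallel$, where $R_{sph}$ is as in the preliminaries and $E_\parallel := \iint f(v)f(w)|z|^\gamma(\hat z\cdot X)^2\dd w\,\dd v$. The $2R_{sph}$ piece is absorbed back into $D_{sph}$ using the Bakry-\'Emery slack: since $\Lambda_3\ge 5.5$ by Theorem \ref{thm: BakryEmery} and $\gamma^2/2\le 9/2$ for $\gamma\in[-3,-2)$, the decomposition \eqref{eq: decomposition} leaves a positive margin $(2\Lambda_3-\gamma^2/2)R_{sph}\ge \tfrac{13}{2}R_{sph}$ that can be diverted from $D_{sph}$. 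For the parallel piece $E_\parallel$, I bound $(\hat z\cdot X)^2 \le 2(\hat z\cdot\grad\log f(v))^2 + 2(\hat z\cdot\grad\log f(w))^2$, symmetrize in $(v,w)$, and use $(\hat z\cdot u)^2\le|u|^2$ pointwise to reduce matters to $\int f(v)|\grad\log f(v)|^2(f\ast|z|^\gamma)(v)\dd v$. Splitting the convolution $(f\ast|z|^\gamma)$ at $|z|=1$ into a far-field bounded by $M_0$ and a near-field handled via a Hardy-type argument using the mass $M_0$ yields the desired $C_3 M_0\cdot i(f) + C_4 M_0^2$ bound with absolute constants.

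The main obstacle I anticipate is extracting the precise weight $\Jap{v}^{\gamma-2}$ in the main-term step: the natural weights coming from the diffusion matrix estimates are $\Jap{v}^\gamma$ and $\Jap{v}^{\gamma+2}$, and the target $\Jap{v}^{\gamma-2}$ arises only through a delicate cancellation between the lower bound on $\tr(H^2 A[f])$ and the subtracted $\tilde A(v)\|H\|^2$, relying crucially on the anisotropy in Lemma \ref{lem: lowerdiffusion}. A secondary subtlety is the Coulomb case $\gamma=-3$: the kernel $|z|^\gamma$ fails to be locally integrable, so the error analysis must exploit the second-order vanishing of $\hat z\cdot X$ at the diagonal together with the Bakry-\'Emery margin to keep everything finite.
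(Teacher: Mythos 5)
Your skeleton matches the paper's: you start from Lemma \ref{lem: newdissipation}, plug in $F=f\otimes f$, split $b_i\cdot\grad_v(\tilde b_j\cdot\grad\log F)$ by the product rule into the Hessian term $b_i\cdot\grad_v^2\log f(v)\,b_j$ plus a first-order remainder, and apply $|A+B|^2\ge\tfrac12|A|^2-|B|^2$. But both of your substantive steps diverge from the paper in ways that leave genuine gaps. For the main term, you propose to expand $\tr(aHaH)$ into $\tilde A(v)\|H\|^2-2\tr(H^2(\tilde A I-A[f]))+I_2$ and then prove $2\tr(H^2A[f])-\tilde A\|H\|^2+I_2\ge c\Jap{v}^{\gamma-2}\|H\|^2$ using an upper bound on $\tilde A=f\ast|\cdot|^{\gamma+2}$. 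This cannot close under the stated hypotheses: Lemmas \ref{lem: improveweight} and \ref{lem: interpolation2} require $f\in L^p_k$ or $f\in L^\infty$, neither of which is available (only mass, energy, entropy are assumed), and even granting such a bound the subtracted term scales like $\Jap{v}^{\gamma+2}\|H\|^2$ while the retained term $\tr(H^2A[f])$ is only bounded below by $\Jap{v}^{\gamma}\|H\|^2$ in the worst (radial) direction, so the difference is negative for large $|v|$ unless $I_2$ rescues it --- the ``delicate cancellation'' you flag is exactly where the argument breaks. The paper never faces this: it proves the coercivity \eqref{eq: newprop} directly by reducing to rank-one $H=ee^t$, writing $\tr(a\,ee^t a\,ee^t)=(|v-w|^2-|(v-w)\cdot e|^2)^2$, and running the concentration argument of Lemma \ref{lem: concentration} together with a measure estimate on the degenerate angular set $\{1-|\hat z\cdot e|^2<\eps_0\}$ with $\eps_0\approx\mu R^{-1}(|v|+R)^{-2}$; the weight $\Jap{v}^{\gamma-2}$ then falls out of $\eps_0^2\cdot(|v|+R)^{\gamma+2}$ with no cancellation needed.

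The second gap is in the error term, and it is the one the paper explicitly warns about. You reduce the parallel piece to $\int f(v)|\grad\log f(v)|^2\,(f\ast|\cdot|^{\gamma})(v)\dd v$ and propose a ``Hardy-type argument using the mass'' for the near-field; but for $\gamma\in[-3,-2)$ the convolution $f\ast|\cdot|^{\gamma}$ (and likewise $f\ast|\cdot|^{\gamma}a$ restricted to any direction) simply cannot be bounded by $M_0$, $E_0$, $H_0$, or higher $L^1$ moments --- the paper's remark after Proposition \ref{prop: dissipationnew} states this outright. The paper's fix, which is the one essential idea missing from your proposal, is to replace $\alpha$ by the cutoff $\tilde\alpha=\eta\alpha$ with $\eta(r)=r^5$ near the origin \emph{before} splitting off the error: since $\tilde\alpha\le\alpha$ the lower bound survives, the concentration argument for the main term still yields $c\Jap{v}^{\gamma-2}$ because it only needs mass at scale $|v-w|\gtrsim 1$, and now both $\tilde\alpha\le 2^{-\gamma}$ and $\lap\tilde\alpha\le 26\cdot 2^{-\gamma}$ are bounded, so the error term is controlled by $2^{3-\gamma}M_0\,i(f)+C M_0^2$ after one symmetrization and an integration by parts on the cross term. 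Your idea of absorbing the tangential part of the remainder into the Bakry--\'Emery margin $(2\Lambda_3-\gamma^2/2)R_{sph}$ is a workable alternative for that piece (though it requires restating the proposition with a $-CR_{sph}$ error and doing the absorption at the level of Theorem \ref{thm: fisherdissipation}), but it does nothing for the radial piece, which still carries the non-cut-off singular kernel.
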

\begin{proof}

Plugging in $F(v,w)=f(v)f(w)$ to Lemma \ref{lem: newdissipation}, we obtain
\begin{align}
    D_{par}+D_{sph}
    &\ge \sum_{i,j=1,2,3} \iint_{\R^6} \frac{2\alpha}{|v-w|^2} f(v)f(w) |b_i\cdot \grad_v(b_j\cdot(\grad_v-\grad_w) \log [f(v)f(w)] )|^2 \dd w \dd v.
\end{align}
We consider the cutoff $\tilde{\alpha}(r)$ of $\alpha(r)$ to remove the singularity at the origin. More precisely, we define
\begin{equation}
    \tilde{\alpha}(r):=\eta(r) \alpha(r)=\eta(r) r^\gamma
\end{equation}
where an increasing function $\eta(r) \in C^\infty(\R)$ equals to $1$ for $r\in [1,\infty)$, equals to $r^5$ for $r \in [0,\frac12]$, and $ \eta''(r)\le 2$ for every $r$. 
It is obvious that
\begin{equation} \label{eq: dissipationnew}
    D_{par}+D_{sph}
    \ge \sum_{i,j=1,2,3} \iint_{\R^6} \frac{2\tilde{\alpha}}{|v-w|^2} f(v)f(w) |b_i\cdot \grad_v(b_j\cdot(\grad_v \log f(v)-\grad_w \log f(w) )|^2 \dd w \dd v.
\end{equation}
Since
\begin{equation}
    b_i\cdot \grad_v (b_j \cdot (\grad_v \log f(v)-\grad_w \log f(w)))= b_i \cdot(\grad_v^2 \log f(v)) b_j +(b_i\cdot \grad_v b_j) (\grad_v \log f(v)-\grad_w\log f(w)),
\end{equation}
it follows that 
\eqref{eq: dissipationnew} is greater than or equal to
\begin{align}
  &\sum_{i,j=1,2,3} \iint_{\R^6} \frac{\tilde{\alpha}}{|v-w|^2} f(v)f(w)  \left(|b_i\cdot \grad_v^2 \log f(v) b_j|^2 -2 |(b_i\cdot \grad_v b_j) (\grad_v \log f(v)-\grad_w\log f(w))|^2 \right) \dd w \dd v
\end{align}
using $|x+y|^2 \ge \frac12 |x|^2-|y|^2$ for $x,y\in \R$.
Denote as
\begin{equation} \label{eq: defJ1}
    J_1:= \sum_{i,j=1,2,3} \iint_{\R^6} \frac{\tilde{\alpha}(|v-w|)}{|v-w|^2} f(v) f(w) |b_i\cdot \grad_v^2 \log f(v) b_j|^2 \dd w \dd v,
\end{equation}
\begin{equation} \label{eq: defJ2}
    J_2:=\sum_{i,j=1,2,3} \iint_{\R^6} \frac{2\tilde{\alpha}(|v-w|)}{|v-w|^2} f(v)f(w) |(b_i\cdot \grad_v b_j) (\grad_v \log f(v)-\grad_w\log f(w))|^2 \dd w \dd v,
\end{equation}
and we estimate $J_1$ and $J_2$ separately.
First, let's estimate $J_1$. Note that
\begin{align}
    \sum_{i,j=1,2,3}|b_i\cdot \grad_v^2 \log f(v) b_j|^2
    % &=\sum_{i,j=1,2,3} (b_j\cdot \grad_v^2 \log f(v) b_i )(b_i^t\grad_v^2 \log f(v) b_j)\\
    &=\sum_{j=1}^3 b_j\cdot \grad_v^2 \log f(v) a_{ij}(v-w) \grad_v^2 \log f(v) b_j\\
    % &=\sum_{j=1}^3 \tr( \grad_v^2 \log f(v) a_{ij}(v-w) \grad_v^2 \log f(v) b_jb_j^t)\\
    &= \tr( \grad_v^2 \log f(v) a_{ij}(v-w) \grad_v^2 \log f(v) a_{ij}(v-w)).
\end{align}
We claim that
\begin{equation} \label{eq: newprop}
     \int_{\R^3} \frac{\tilde{\alpha}(|v-w|)}{|v-w|^2} f(w) \tr(a_{ij}(v-w) H a_{ij}(v-w) H)  \dd w \ge  c
     \Jap{v}^{\gamma-2}\norm{H}^2
\end{equation}
for every symmetric $3\times 3$ matrix $H$ where $c$ only depending on $M_0,E_0,H_0$. The idea of the proof is analogous to that of Lemma \ref{lem: lowerdiffusion}. Note that a version with a cutoff matrix for the Coulomb potential is given in \cite[Lemma 2.3]{ji2024entropy}. 

Applying the spectral decomposition for a symmetric matrix, it is enough to show that
\begin{equation} \label{eq: coercivitiy}
     \int_{\R^3} \frac{\tilde{\alpha}(|v-w|)}{|v-w|^2} f(w) \tr(a_{ij}(v-w) e e^t a_{ij}(v-w) e e^t)  \dd w \ge  c
     \Jap{v}^{\gamma-2}
\end{equation}
for a unit vector $e\in \R^3$. Note that
\begin{equation}
    \tr(a_{ij}(v-w) e e^t a_{ij}(v-w) e e^t) =\langle a_{ij}(v-w) e ,e \rangle^2=\left( |v-w|^2-|(v-w)\cdot e|^2 \right)^2,
\end{equation}
so \eqref{eq: coercivitiy} is equivalent to
\begin{equation} \label{eq: coercivitiy2}
     \int_{\R^3} \frac{\tilde{\alpha}(|v-w|)}{|v-w|^2}  
 \left( |v-w|^2-|(v-w)\cdot e|^2 \right)^2 f(w)   \dd w  \ge  c \Jap{v}^{\gamma-2}.
\end{equation}
The left hand side of \eqref{eq: coercivitiy2} is equal to
\begin{equation} \label{eq: coercivitiy3}
    \int_{\R^3} \frac{\tilde{\alpha}(|w|)}{|w|^2}  \left( |w|^2-|w\cdot e|^2 \right)^2 f(v-w) \dd w=\int_{\R^3} \eta(|w|) |w|^{\gamma+2}  \left( 1-\frac{|w\cdot e|^2}{|w|^2} \right)^2 f(v-w) \dd w
\end{equation}
By Lemma \ref{lem: concentration}, there exist $R,\mu,l$ such that
\begin{equation}
    |\{ w \in B_R(v) : f(v-w) \ge l \} | \ge \mu. 
\end{equation}
On the other hand, as in the proof of \cite[Lemma 3.2]{silvestre2017upperboundLandau},
\begin{equation}
    |\{ w \in B_R(v) :  1-\frac{|w\cdot e|^2}{|w|^2}<\eps_0 \}| \lesssim R (\sqrt{\eps_0}(|v|+R))^2 = \eps_0 R(|v|+R)^2.
\end{equation}
Choose $\eps_0 \approx \frac{\mu}{2R} (|v|+R)^{-2}$ so that
\begin{equation}
    |\{ w \in B_R(v) :  1-\frac{|w\cdot e|^2}{|w|^2}<\eps_0 \}| \le \frac{\mu}{2}.
\end{equation}
Then, either
\begin{equation}
    |\{ w \in B_R(v) \cap B_{1/2}(0) :  1-\frac{|w\cdot e|^2}{|w|^2} \ge \eps_0 \text{ and } f(v-w) \ge l \}|
\end{equation}
or
\begin{equation}
      |\{ w \in B_R(v) \cap B_{1/2}^c(0) :  1-\frac{|w\cdot e|^2}{|w|^2} \ge \eps_0  \text{ and } f(v-w) \ge l \}| 
\end{equation}
is greater than or equal to $\frac{\mu}{4}$. Depending on the cases, it follows that \eqref{eq: coercivitiy3} is greater than or equal to
\begin{equation}
    \min \left( \eps_0^2 l \int_{B_{r_0}} |w|^{\gamma+7} \dd w , \eps_0^2 l \frac{\mu}{4} \frac{(|v|+R)^{\gamma+2}}{32} \right) \approx \min \left( \eps_0^2 l \mu^{(\gamma+10)/3},\eps_0^2 l \mu (|v|+R)^{\gamma+2} \right)
\end{equation}
where $|B_{r_0}|=\frac{\mu}{4}$.
Hence,
\begin{equation}
     \int_{\R^3} \frac{\tilde{\alpha}(|v-w|)}{|v-w|^2} f(w) \tr(a_{ij}(v-w) e e^t a_{ij}(v-w) e e^t)  \dd w \gtrsim \min \left( \frac{ l \mu^{(\gamma+16)/3}}{R^2(|v|+R)^4}, \frac{l \mu^3 (|v|+R)^{\gamma-2}}{R^2} \right) \ge c \Jap{v}^{\gamma-2}
\end{equation}
for some $c>0$ depending on $R,\mu,l$, which proves \eqref{eq: newprop}. It follows that
\begin{equation}
    J_1 \ge c \int_{\R^3} \Jap{v}^{\gamma-2} f(v)\norm{\grad_v^2 \log f(v)}^2 \dd v.
\end{equation}
Let's estimate $J_2$.
Since
\begin{equation}
    \frac{1}{|v-w|^2}\sum_{i=1}^3 b_ib_i^t=\frac{1}{|v-w|^2} a(v-w) \le I_3,
\end{equation}
it follows that 
\begin{equation} \label{eq: estimateJ2}
    |J_2| \le \sum_{j=1}^3 \iint_{\R^6} 2\tilde{\alpha}(|v-w|)f(v)f(w) |(\grad_v b_j)(\grad_v \log f(v)-\grad_w \log f(w))|^2 \dd w \dd v
\end{equation}
Recall that
\begin{equation}
    b_j(v-w)=e_j\times(v-w) \Rightarrow \grad_v b_j(v-w)=e_j \times I_3.
\end{equation}
From \eqref{eq: estimateJ2}, we deduce
\begin{align}
    |J_2|
    & \le \sum_{j=1}^3 \iint_{\R^6} 2\tilde{\alpha}(|v-w|)f(v)f(w) |(e_j \times I_3)(\grad_v \log f(v)-\grad_w \log f(w))|^2 \dd w \dd v\\
    & = \sum_{j=1}^3 \iint_{\R^6} 2\tilde{\alpha}(|v-w|)f(v)f(w) |e_j \times (\grad_v \log f(v)-\grad_w \log f(w))|^2 \dd w \dd v\\
    & = 4 \iint_{\R^6} \tilde{\alpha}(|v-w|)f(v)f(w) | \grad_v \log f(v)-\grad_w \log f(w)|^2 \dd w \dd v.
\end{align}
Expand $|\grad_v \log f(v) -\grad_w \log f(w)|^2$ and use the symmetry to obtain 
\begin{equation}
    |J_2| \le 8 \iint_{\R^3} \left(\int_{\R^3} \tilde{\alpha}(|v-w|)f(w) \dd w \right) f(v) |\grad_v \log f(v)|^2 \dd v  -8\iint_{\R^6} \tilde{\alpha}(|v-w|) \grad_v f(v) \grad_w f(w) \dd w \dd v.
\end{equation}
From the definition of $\tilde{\alpha}$, it is obvious that
$\tilde{\alpha}(|v-w|) \le 2^{-\gamma}$. Hence,
\begin{equation}
    \int_{\R^3} \tilde{\alpha}(|v-w|) f(w) \dd w \le 2^{-\gamma} \int_{\R^3} f(w) \dd w =2^{-\gamma} M_0.
\end{equation}
On the other hand,
\begin{align}
    -8\iint_{\R^6} \tilde{\alpha}(|v-w|) \grad_v f(v) \grad_w f(w) \dd w \dd v
    &=-8\iint_{\R^6} \grad_v \cdot  \grad_w\tilde{\alpha}(|v-w|)  f(v) f(w) \dd w \dd v\\
    &=8\iint_{\R^6} \lap_v \tilde{\alpha}(|v-w|)  f(v) f(w) \dd w \dd v.
    % &=8\iint_{\R^5} \grad_v \cdot \left( \tilde{\alpha}' \frac{v-w}{|v-w|}\right) f(v) f(w) \dd w \dd v\\
    % &=8\iint_{\R^6} \left( \tilde{\alpha}''+ \frac{2\tilde{\alpha}'}{|v-w|}\right) f(v) f(w) \dd w \dd v.
\end{align}
using integration by parts. Note that
\begin{align}
    \lap_v \tilde{\alpha}(|v-w|)
    &= \eta \lap \alpha + 2\grad \eta \cdot \grad \alpha + \alpha \lap \eta\\
    &= \gamma(\gamma+1)\eta|v-w|^{\gamma-2} +2(\gamma+1)\eta'|v-w|^{\gamma-1} +\eta''|v-w|^{\gamma}\\
    & \le 6\eta  |v-w|^{\gamma-2}+\eta''|v-w|^\gamma.
\end{align}
If $|v-w| >\frac12$, then $ 6\eta  |v-w|^{\gamma-2}+\eta''|v-w|^\gamma\le 6|v-w|^{\gamma-2}+2|v-w|^\gamma \le 26\cdot 2^{-\gamma}.$ Otherwise, if $|v-w| \le \frac12$, then
$6\eta  |v-w|^{\gamma-2}+\eta''|v-w|^\gamma= 26|v-w|^{\gamma+3} \le 26 \cdot 2^{-\gamma-3}$. Thus, $\lap_v \tilde{\alpha} \le 26 \cdot 2^{-\gamma}$. 
We deduce that
\begin{equation}
    |J_2| \le 2^{3-\gamma} M_0 \cdot i(f) +2^{3-\gamma} \cdot 26 M_0^2
\end{equation}
and it concludes the proof of the proposition.
\end{proof}

\begin{remark}
    The cutoff of $\alpha$ is necessary to bound $J_2$ from above. Without the cutoff, we cannot control $J_2$ using hydrodynamic bounds($M_0,E_0,H_0$) or higher $L^1$ moments.
\end{remark}   
\begin{proof}[Proof of Theorem \ref{thm: fisherdissipation}]
    Recall that
    \begin{equation}
        -\frac12 \partial_t I(F) =D_{par}+D_{rad}+D_{sph}-\iint F \frac{\alpha'^2}{2\alpha} |\tilde{b}_k\cdot \grad\log F|^2 \dd v \dd w.
    \end{equation}
    from \eqref{eq: decomposition}. Thus,
    \begin{align}
        -\frac12 \partial_t I(F)
        &\ge D_{par}+D_{sph} -\sup_{r>0} \left(\frac{r^2\alpha'(r)^2}{2\alpha(r)^2} \right) R_{sph}\\
         &\ge D_{par}+D_{sph} -\frac{1}{4\Lambda_3}\sup_{r>0} \left(\frac{r^2\alpha'(r)^2}{\alpha(r)^2} \right) D_{sph}\\
         &\ge \left[1- \frac{1}{4\Lambda_3}\sup_{r>0} \left(\frac{r^2\alpha'(r)^2}{\alpha(r)^2} \right)  \right] (D_{par}+D_{sph}).
    \end{align}
    Since we are assuming $\alpha(r)=r^\gamma$, we obtain
    \begin{equation}
        -\frac12 \partial_t I(F) \ge \left( 1-\frac{\gamma^2}{4\Lambda_3}\right) (D_{par}+D_{sph}).
    \end{equation}
    Lemma \ref{lem: liftedFisher}, Theorem \ref{thm: BakryEmery}, and Proposition \ref{prop: dissipationnew} implies that
    \begin{equation}
        - \partial_t i(f) \ge \left(1- \frac{9}{22 }\right) (D_{par}+D_{sph}) \ge  c_1 \int_{\R^3} \Jap{v}^{\gamma-2} f(v) \norm{\grad_v^2 \log f(v)}^2 \dd v -C_1 M_0 \cdot i(f) -C_2M_0^2.
    \end{equation}
\end{proof}

\section{Global existence of smooth solutions for initial data in $L^1_{2-\gamma} \cap L\log L$}
In this section, we prove the global existence of smooth solutions to the Landau equation \eqref{eq: Landaueq} for $\alpha(r)=r^\gamma$ with $\gamma \in [-3,-2)$, given initial data in $L^1_{2-\gamma} \cap L \log L$. 
\noindent
We follow the method outlined in \cite{golding2024local}, which concerns the case of the Coulomb potential.

\begin{proof} [Proof of Theorem \ref{thm: globalexistence}]
  
Consider a nonnegative initial data $f_0$ with mass $M_0$, energy $E_0$, and entropy $H_0$. Moreover, assume that 
\begin{equation}
    \int_{\R^3} f_0(v)|v|^{2-\gamma} \dd v \le W_0.
\end{equation}
\textbf{Step 1 : Approximation}\\
Approximate $f_0$ with a family of functions $\{f_0^\eps\}_{\eps>0}$ in $\R^3$ so that
\begin{enumerate}
    \item $f_0^\eps \in C^\infty_c(\R^3)$.
    \item $f_0^\eps$ has mass $M_0$, and energy $E_0$.
    \item $f_0^\eps$ satisfies the bounds
    \begin{equation}
        \int_{\R^3} f_0^\eps(v) |v|^{2-\gamma} \le 2W_0, \quad \int_{\R^3 } f_0^\eps(v) \log f_0^\eps(v) \dd v \le 2H_0.
    \end{equation}
    \item $f_0^\eps \to f_0$ strongly in $L^1_{2-\gamma}(\R^3)$ and pointwise a.e. as $\eps \to 0$.
\end{enumerate}
For each $\eps>0$, there exists a global in time Schwartz solution $f^\eps:[0,\infty)\times \R^3 \to \R_{\ge 0}$ with the initial data $f_0^\eps$ by Theorem \ref{thm: luisglobal}. Indeed, the Fisher information of $f^\eps_0$ is bounded since
\begin{equation}
    i(f_0^\eps) \le  C(\delta) \norm{f_0^\eps}_{H^2_{(3/2)+\delta}}
\end{equation}
holds for a given $\delta>0$, which was observed in \cite{toscani2000trend}.\\
\textbf{ Step 2: Uniform estimates}\\
For a given $T>0$, the bound 
\begin{equation}
     C_{Sobolev} \norm{f^\eps(t)}_{L^3(\R^3)} \le  i(f^\eps(t)) \le C_0(\gamma, T,M_0,W_0,H_0 )\left(1+\frac1t\right)
\end{equation}
holds for all $t\in (0,T]$ by Theorem \ref{thm: Fisherdecay} and the Sobolev embedding.
For small $\tau>0$, it follows that
\begin{equation} \label{eq: condition regularity}
    \norm{f^\eps}_{L^\infty([\tau/2,T]; L^3( \R^3)) }\le \frac{ C_0(\gamma, T,M_0,W_0,H_0 ) }{C_{Sobolev}} \left(1+\frac{2}{\tau}\right). 
\end{equation}
On the other hand, from Theorem \ref{thm: highermoment}, we have the estimate
\begin{equation} \label{eq: highermoment}
    \norm{f^\eps}_{L^\infty([\tau/2, T ];L^1_{2-\gamma}(\R^3))} \le C(\gamma,T,M_0,W_0, H_0).
\end{equation}
Interpolating \eqref{eq: condition regularity} and \eqref{eq: highermoment} yields
\begin{equation}
    \norm{f^\eps}_{L^\infty([\tau/2,T];L^p_k(\R^3))} \le C(\gamma, \tau, T, M_0, W_0, H_0).
\end{equation}
for $k=(2-\gamma)\frac{3-p}{2p}$. A simple computation show that there exists $p \in (\frac{3}{5+\gamma},\frac{3}{3+\gamma})$ such that
\begin{equation}
    (2-\gamma)\frac{3-p}{2p} > \frac1p \max\left(3p-3,-\frac{3\gamma}{2}-1 \right).
\end{equation}
For example, when $\gamma=-3$, we can take $p=1.55$.
Applying Theorem \ref{thm: conditional}, we obtain the uniform $L^\infty$ upper bound of $f$:
\begin{equation} \label{eq: Linftybound}
     \norm{f^\eps}_{L^\infty([\tau,T]\times \R^3) }\le C(\gamma,\tau,T,M_0,W_0,H_0)
\end{equation}
for some $C>0$.\\
\textbf{ Step 3 : Higher regularity estimates}\\
With the uniform $L^\infty $ bound of $f^\eps$, we can interpret the Landau equation \eqref{eq: Landaueq} as a uniform parabolic equation:
\begin{equation}
    \partial_t f^\eps =\grad \cdot (A[f^\eps] f^\eps -b [f^\eps] f^\eps ).
\end{equation}
Indeed, the diffusion matrix $A[f^\eps]$ is uniformly elliptic locally and $b[f^\eps]$ is uniformly bounded from above by \eqref{eq: upperdiffusion}, \eqref{eq: upperfirstorder}, and Lemma \ref{lem: lowerdiffusion}.
 Applying the standard technique of the parabolic De Giorgi-Nash-Moser and Schauder esitmates, we obtain  higher regularity estimates:
 For a given $R>0$ and a positive integer $m$, we have
\begin{equation} \label{eq: higherregularity}
    \norm{f^\eps}_{C^m([\tau,T]\times B_R)} \le C(m,R, \gamma,\tau,T,M_0,W_0,H_0).
\end{equation}
See \cite[Lemma 5.2]{golding2024local} for the detailed argument. \\
\textbf{ Step 4: Compactness and passing to the limit}\\
As a consequence of the uniform estimates in \eqref{eq: Linftybound} and \eqref{eq: higherregularity}, there exists a limit function $f:[0,T]\times \R^3 \to \R_{\ge 0}$ so that the convergence $f^\eps \to f$ holds in the following topologies:
\begin{enumerate}
    \item strongly in $C^m([\tau,T]\times B_R)$ for each positive integer $m$, for each $R>0$, and for each $0<\tau<T$
    \item pointwise a.e. in $[0,T]\times \R^3$
    \item strongly in $L^1([0,T]\times \R^3)$
    \item weak-$\ast$ly in $L^\infty([0,T];L^1_{2-\gamma}(\R^3))$  
\end{enumerate}
Note that the strong convergence in $L^1$ comes from uniform integrability, pointwise convergence, tightness. In particular, the limit function $f$ satisfies $f \ge 0$ and $f\in L^\infty([0,T];L^1_2(\R^3))$ by Fatou's lemma.
Note that if we show $f$ is a solution of the Landau equation, then $f$ automatically satisfies normalization. 
For $\varphi \in C_c^\infty((\tau,T]\times \R^3)$, we have
\begin{equation} \label{eq: test}
    \int_{\tau}^T \int_{\R^3} \partial_t f^\eps \varphi \dd v \dd t =-\int_{\tau}^T\int_{\R^3} \grad \varphi \cdot (A[f^\eps]\grad f^\eps- b[f^\eps] f^\eps ) \dd v \dd t.
\end{equation}
Since
\begin{equation}
    \norm{A[f^\eps-f]}_{L^1([\tau,T];L^\infty(\R^3))} \le \norm{f^\eps-f}_{L^1([\tau,T];L^1(\R^3))}^{1+\frac{\gamma+2}{3}} \norm{f^\eps-f}_{L^1([\tau,T];L^\infty(\R^3))}^{\frac{-\gamma-2}{3}}, 
\end{equation}
\begin{equation}
    \norm{b[f^\eps-f]}_{L^1([\tau,T];L^\infty(\R^3))} \le \norm{f^\eps-f}_{L^1([\tau,T];L^1(\R^3))}^{1+\frac{\gamma+1}{3}}
    \norm{f^\eps-f}_{L^1([\tau,T];L^\infty(\R^3))}^{\frac{-\gamma-1}{3}}, 
\end{equation}
we have the convergence of coefficient terms as $\eps \to 0$. Therefore, with the strong convergence in $C^1$, we obtain
\begin{equation}
    \int_\tau^T \int_{\R^3} \partial_t f \varphi \dd v \dd t =-\int_\tau^T \int_{\R^3} \grad \varphi \cdot (A[f]\grad f-b[f] f) \dd v \dd t.
\end{equation}
Thus, the limit $f$ solves the equation \eqref{eq: Landaueq} in a distributional sense, and it is smooth for any positive time. 
We conclude that $f$ solves the Landau equation \eqref{eq: Landaueq} in the classical sense for $(0,T]\times \R^3.$\\
\textbf{ Step 5: Behavior at the initial time}\\
For any test function $\psi \in C^2_c (\R^3)$, we have
\begin{equation}
    \int_{\R^3} f^\eps(\tau) \psi \dd v -\int_{\R^3}f^\eps_0 \psi \dd v =\int_0^\tau \psi \partial_t f^\eps \dd v \dd t
\end{equation}
for every $\tau \in [0,T]$, by integration by parts.
By symmetrizing and the Cauchy-Schwarz inequality,
\begin{align}
    &\int_0^\tau \int_{\R^3}  \psi \partial_t f^\eps \dd v \dd t\\
    =&-\frac12 \int_0^\tau \iint_{\R^6} \frac{a(v-w)}{|v-w|^2} (\grad_v -\grad_w)\left[f^\eps(v)f^\eps(w)|v-w|^{\gamma+2}\right] (\grad_v \psi (v)-\grad_w \psi(w)) \dd w \dd v \dd t\\
    =& \int_0^\tau \iint_{\R^6} \frac{a(v-w)}{|v-w|^2} (\grad_v -\grad_w)\left[\sqrt{f^\eps(v)f^\eps(w)|v-w|^{\gamma+2}}\right]\sqrt{f^\eps(v)f^\eps(w)|v-w|^{\gamma+2}} (\grad_v \psi(v)-\grad_w \psi(w)) \dd w \dd v \dd t\\
    \le & \left( \int_0^\tau \iint_{\R^6} X_\eps^2 \dd w \dd v \dd t\right)^{1/2} \left( \int_0^\tau \iint_{\R^6} Y_\eps^2 \dd w \dd v \dd t\right)^{1/2}
\end{align}
where
\begin{equation}
    X_\eps :=\frac{a(v-w)}{|v-w|^2} (\grad_v -\grad_w)\left[\sqrt{f^\eps(v)f^\eps(w)|v-w|^{\gamma+2}}\right]  ,
\end{equation}
\begin{equation}
    Y_\eps := \sqrt{f^\eps(v)f^\eps(w) |v-w|^{\gamma+2}} (\grad_v \psi(v)-\grad_w \psi(w))  .
\end{equation}
Note that
\begin{align}
    \int_0^\tau \iint_{\R^6} X_\eps^2 \dd w \dd v \dd t 
    &= \frac14 \int_0^\tau \iint_{\R^6} f^\eps(v)f^\eps(w)|v-w|^{\gamma} a(v-w) \left(\frac{\grad f^\eps(v)}{f^\eps(v)}-\frac{\grad f^\eps(w)}{f^\eps(w)}\right) \left(\frac{\grad f^\eps(v)}{f^\eps(v)}-\frac{\grad f^\eps(w)}{f^\eps(w)}\right)  \dd w \dd v \dd t\\
    &=\frac14 \int_0^\tau \partial_t h(f^\eps(t)) \dd t=\frac14 \left(h(f^\eps(0))-h(f^\eps(\tau)) \right) \le C(M_0,E_0,H_0).
\end{align}
On the other hand,
\begin{align}
    \int_0^\tau \iint_{\R^6} Y_\eps^2 \dd w \dd v \dd t 
    &\le \norm{\grad^2 \psi}_{L^\infty(\R^3)} \int_0^\tau  \iint_{\R^6} f^\eps(v)f^\eps(w)  |v-w|^{\gamma+4} \dd w \dd v \dd t\\
    &\le  C(\gamma) \norm{\grad^2 \psi}_{L^\infty(\R^3)} \int_0^\tau \iint_{\R^6} f^\eps(v)f^\eps(w) (1+|v-w|^2) \dd w  \dd v \dd t\\
    & \le C(\gamma,M_0,E_0) \tau \norm{\grad^2 \psi}_{L^\infty(\R^3)}.
\end{align}
We have shown that
\begin{equation}
    \left|\int_{\R^3} f^\eps(\tau) \psi \dd v -\int_{\R^3} f_0^\eps \psi \dd v \right|\le C(\gamma,M_0,E_0,H_0) \tau^{1/2} \norm{\grad^2 \psi}_{L^\infty(\R^3)}^{1/2}.
\end{equation}
By sending $\eps \to 0$, we obtain
\begin{equation}
     \left|\int_{\R^3} f(\tau) \psi \dd v -\int_{\R^3} f_0 \psi \dd v \right|\le C(\gamma,M_0,E_0,H_0) \tau^{1/2} \norm{\grad^2 \psi}_{L^\infty(\R^3)}^{1/2}.
\end{equation}
\end{proof}
\begin{remark}
    The estimates in Step $5$ are essentially coming from the weak $H$-formulation in \cite{villani1998newclass}, where the following notation for the projection matrix $\Pi$ is used:
\begin{equation}
    \Pi(v):=\frac{a(v)}{|v|^2}.
\end{equation}
It is easy to see that $\Pi^2= \Pi$ and $\Pi(\grad_v -\grad_w) |v-w|^{(\gamma+2)/2}$.
\end{remark}

\appendix
\section{A proof of Remark \ref{remark: LlogL} } \label{appendix : 1}
Suppose that a positive function $f$ on $\R^3$ satisfies
\begin{equation}
    \int_{\R^3} f(v) \dd v= M_0, \quad \int_{\R^3 } f(v) |v|^2 \dd v =E_0, \quad \int_{\R^3 }f(v) \log f(v)\le H_0.
\end{equation}
Then,
\begin{align}
    \int_{\R^3} f(v) |\log f(v)| \dd v
    % &= -\int_{ \{ 0< f < 1\} } f(v) \log f(v) \dd v +\int_{ \{ f \ge 1\} } f(v) \log f(v) \dd v \\
    &= -2 \int_{ \{ 0< f < 1\} } f(v) \log f(v) \dd v +\int_{ \R^3 } f(v) \log f(v) \dd v \\
    & \le  -2 \int_{ \{ f \le e^{-1-|v|^2} \} } f(v) \log f(v) \dd v -2 \int_{ \{ e^{-1-|v|^2} \le f < 1\} } f(v) \log f(v) \dd v + H_0\\
    & \le  -2 \int_{ \{ f \le e^{-1-|v|^2} \} } f(v) \log f(v) \dd v +2 \int_{ \{ e^{-1-|v|^2} \le f < 1\} } f(v) \left(1+|v|^2 \right) \dd v + H_0\\
    & \le  -2 \int_{ \{ f \le e^{-1-|v|^2} \} } f(v) \log f(v) \dd v +2M_0 +2E_0 + H_0.
\end{align}
Noticing that the function $ -x \log x$ is increasing for $x\in (0,\frac1e]$, it follows that
\begin{equation}
     -2 \int_{ \{ f \le e^{-1-|v|^2} \} } f(v) \log f(v) \dd v \le 2 \int_{ \{ f \le e^{-1-|v|^2} \} } e^{-1-|v|^2} \left(1+|v|^2 \right)  \dd v \le 2 \int_{\R^3} e^{-1-|v|^2} \left(1+|v|^2 \right)  \dd v.
\end{equation}
Since $e^{-1-|v|^2} \left(1+|v|^2 \right)$ is integrable on $\R^3$, Remark \ref{remark: LlogL} is proved.

\bibliographystyle{abbrv}
\bibliography{citeFisher}

\end{document}